\documentclass[fullpage]{amsart}

\usepackage{amssymb,xcolor,vaucanson-g}
\usepackage{hyperref} 
\usepackage{psfrag} 
\usepackage{epsfig}
\usepackage{amsmath, amsfonts, amstext, amscd, bezier}
\usepackage{pst-node}
\usepackage{graphicx}

\parindent = 0cm

\newcommand{\A}{{\mathcal A}}
\newcommand{\C}{{\mathcal C}}
\newcommand{\D}{{\mathcal D}}
\newcommand{\E}{{\mathcal E}}

\newcommand{\RR}{{\mathbb R}}
\newcommand{\CC}{{\mathbb C}}
\newcommand{\NN}{{\mathbb N}}
\newcommand{\ZZ}{{\mathbb Z}}

\newcommand{\Er}{{\mathcal E}_{1,1}}
\newcommand{\Eart}{{\mathcal E}_{1,1,1}}

\newcommand{\Egen}{{\mathcal E}_{a_1,\dots ,a_d}}

\newcommand{\Dgen}{{\mathcal D}_{a_1,\dots ,a_d}}
\newcommand{\Dgeninf}{{\mathcal D}_{a_1,\dots ,a_d}^{\infty}}
\newcommand{\Dinfty}{{\mathcal D}^{\infty}}

\newtheorem{theo}{Theorem}
\newtheorem{prop}[theo]{Proposition}

\newtheorem{coro}[theo]{Corollary}
\newtheorem{lemma}[theo]{Lemma}

\newcommand{\MinusScale}{0.35}
\newcommand{\MinusPicture}{\FixVCScale{\MinusScale}}

\begin{document}

\title{Boundary of the Rauzy fractal sets in $\RR \times \CC$ generated by $P(x) = x^4-x^3-x^2-x-1$}
\author{F. Durand}
\address{Laboratoire Ami\'enois
de Math\'ematiques Fondamentales et Appliqu\'ees, CNRS-UMR 6140,
Universit\'{e} de Picardie Jules Verne, 33 rue Saint Leu, 80039
Amiens Cedex, France.}

\email{fabien.durand@u-picardie.fr}

\author{A. Messaoudi}
\address{Departamento de Matem\'atica,
 Unesp-Universidade Estadual Paulista,
Rua Cristov\~ao Colombo, 2265, Jardim Nazareth, 15054-000, S\~ao
Jos\'e do Rio Preto, SP, Brasil.}

\email{messaoud@ibilce.unesp.br}

\subjclass{} \keywords{}

\pagestyle{plain}

\begin{abstract}
We study the boundary of the $3$-dimensional Rauzy fractal ${\mathcal E} \subset \RR \times \CC$ generated by the polynomial $P(x) = x^4-x^3-x^2-x-1$.
The finite automaton characterizing the boundary of ${\mathcal E}$ is given explicitly.
As a consequence we prove that the set ${\mathcal E}$ has $18$ neighborhoods
where $6$ of them intersect the central tile ${\mathcal E}$ in a point.
Our construction shows that the boundary is generated by an iterated function system
starting with $2$ compact sets.
\end{abstract}

\maketitle
\today

\section{Introduction}
\label{intro}

Consider $A= \{1,2,3\}$  as an alphabet. Let $A^{*}$ be the
set of finite words on $A$ and $\sigma : A\to A^{*}$ be the map
(called Tribonacci substitution) defined by
$$
\sigma (1) = 12,\; \sigma (2) = 13,\; \sigma (3) = 1.
$$
We extend $\sigma $ to $A^{\NN}$ by concatenation :
$\sigma(a_0\cdots a_{n}\ldots)=\sigma(a_0)\cdots \sigma (
a_{n})\ldots$. It is clear that $\sigma $ has a unique fixed point
$u$ : $\sigma (u) = u\in A^{\NN}$.  The dynamical system
associated to $\sigma$ is the couple $(\Omega, S)$ where $S :
A^{\NN} \to A^{\NN}$ is the shift map ($S ((x_n)_{n\in
\NN})=(x_{n+1})_{n\in \NN}$) and $\Omega$ is the $S$-orbit closure
of $u$ : $\Omega = \overline{\{ S^n u | n\in \NN\}}$.
It is well known that $(\Omega , S)$ is minimal, uniquely ergodic and of zero entropy (see \cite{Q87,F02}
for more details).

In 1982, G. Rauzy \cite{R82} studied the Tribonacci substitution
$\sigma$.
 He proved that the dynamical system generated by
$\sigma$ is measure theoretically conjugate to an exchange of domains $X_1, X_2, X_3$ in a compact tile
$X=X_1 \cup X_2 \cup X_3.$
The set $X$ is the classical two-dimensional Rauzy fractal.
It has been extensively studied and is related to many topics : numeration systems \cite{M00,M06,M05},
 geometrical representation of symbolic dynamical systems \cite{AI01,AIS01,CS01,HZ98,M98,T06,S96},
multidimensional continued fractions and simultaneous approximations \cite{ABI02,CHM01,C02,HM06}, self-similar tilings \cite{A99,A00,AI01,P99}
and Markov partitions of Hyperbolic automorphisms of the torus \cite{KV98,M98,P99}.

Among the main properties of the set $X$, let us recall it is compact,
connected, its interior is simply connected, its boundary is fractal
 and it induces a periodic tiling of $\mathbb{R}^2$ (\cite{R82}).

It is possible to associate such a fractal set to a large class of substitutions over an alphabet with $d$ letters  (called unimodular Pisot substitutions).
Let us call them {\it Rauzy fractals}.
P. Arnoux and S. Ito \cite{AI01} (see also \cite{CS01}) proved that the dynamical system associated to such a substitution $\sigma$
is measure theoretically conjugate to an exchange of domains $X_1, \ldots, X_d$ in the Rauzy fractal
$X_{\sigma}=X_1 \cup \ldots \cup X_d \subset \mathbb{R}^{d-1}$ provided that a certain combinatorial condition is true.
All these sets $X_{\sigma}$ are compact and generate a periodic tiling of
 $\mathbb{R}^{d-1}$.

There are different ways to define the Rauzy fractal associated to a given substitution $\sigma$ over an alphabet of $d+1$ letters.
One is through numeration systems.

Let $d \geq 2$ and $a_1, a_2, \ldots, a_d $ be integers such that
$a_1 \geq a_2 \geq \ldots \geq a_d \geq 1$.
Consider $A= \{1,2,\ldots, d+1\}$ as an alphabet.
Let  $\sigma_d$ be the substitution  defined by

$$
\sigma_d (i)=\underbrace{11\ldots 1}_{a_i}\;(i+1) \hbox { if } i \leq d
\hbox { and } \sigma_d (d+1)=1.
$$

We define the Rauzy fractal associated to $\sigma_d$ as follows.
Consider the sequence $(F_n)_{n \geq 0}$ defined by

$$
F_{n+d+1}= a_1 F_{n+d}+ a_2 F_{n+d-1}+\cdots + a_d F_{n+1}+F_n ,\; \forall n \geq 0 ,
$$

with initial conditions (called {\it Parry conditions})
$$F_0=1,\; F_{n}= a_1 F_{n-1}+ \cdots + a_n F_{0}+1, \; \forall \; 1 \leq n \leq d.$$

It is well-known (using the greedy algorithm) that for
every integer $n$ we have $n= \sum_{i=0}^{N} c_{i}F_i$
with $(c_{i})_{0 \leq i \leq N} \in \Dgen$, where $\Dgen $
is the set of sequences $(\varepsilon_{i})_{l \leq i \leq k}$, $l, k \in \ZZ$,
such that for all $ l\leq i \leq k$ :

\begin{enumerate}
\item
$\varepsilon_{i}\in \{ 0,1, \dots , a_1 \}$ ;
\item
$\varepsilon_{i}\varepsilon_{i-1} \ldots \varepsilon_{i-d}
 <_{lex} a_1 a_2 \ldots a_d 1$ when $ i \geq l+ d  $ .
\item
$\varepsilon_{i}\varepsilon_{i-1} \ldots \varepsilon_{l}0^{d-i+l}
 <_{lex} a_1 a_2 \ldots a_d 1$ when $ l \leq i \leq l+ d $,
\end{enumerate}

where $<_{lex}$ is the usual lexicographic ordering. We set

$$
\Dgeninf =
\left\{ (\varepsilon_i)_{i\geq l} ; l\in \ZZ ,
(\varepsilon_{i})_{l \leq i \leq n} \in \Dgen, \forall n\geq l
\right\} .
$$

Now, consider the following polynomial

$$
P_{a_1, \ldots a_d}(x)=x^{d+1} -a_1 x^d-a_2x^{d-1}- \cdots - a_d x-1 .
$$

It can be checked that $P$ has a root $\beta= \beta_1  \in
]1,+\infty [$ and $d$ roots with modulus less than $1$. Let
$\beta_1, \beta_2,\beta_3,\ldots, \beta_r$ be the roots of $P$
belonging to $\RR$ and $\beta_{r+1},\ldots, \beta_{r+s},
\overline{\beta_{r+1}},\ldots,\overline{\beta_{r+s}}$ its complex
roots.
For all $i \in \mathbb{Z}$, we set

$$
\alpha^i=
(\beta_{2}^{i},\ldots, \beta_{r}^{i},
\beta_{r+1}^{i},\ldots,\beta_{r+s}^{i}).
$$

We also put $\alpha^0=
1=(1,\ldots, 1).$
Then, the Rauzy fractal associated to $\sigma$ is the set ${\mathcal E}_{a_1, \ldots a_d} \subset
  \mathbb{R}^{r-1} \times \mathbb{C}^{s} \approx \mathbb{R}^{d}$
defined by

$$
{\mathcal E}_{a_1, \ldots a_d} = \left\{ \sum_{i=d+1}
^{+\infty}\varepsilon_{i}\alpha^{i} ;(\varepsilon_{i})_{i \geq d+1}
\in \Dgeninf
 \right\}.
 $$

The set ${\mathcal E}_{1,1}= X$ is the classical two-dimensional Rauzy fractal.

The structure of the boundary of Rauzy fractals has been first investigated by Ito and M. Kimura in \cite{IK91}.
They showed that the boundary of
${\mathcal E}_{1,1}$ is a Jordan curve generated by the Dekking
method \cite{D82} and they calculated its Hausdorff dimension.
Relating the boundary of ${\mathcal E}_{a_1,1}$
to the complex numbers having at least two expansions in base
$\alpha$,  A. Messaoudi \cite{M00,M05} constructed  a finite automaton characterizing and
generating this boundary.
As a consequence it permitted to
parameterize the boundary of ${\mathcal E}_{a_1,1}$, to compute its the Hausdorff dimension
 and to show it is a quasi circle.

In \cite{T06}, J. M. Thuswaldner studied the set ${\mathcal E}_{a_1,a_2}$.
In particular, he gave an explicit formula for the fractal dimension of the boundary of this set.

In this paper we propose to study the boundary of the Rauzy fractal set ${\mathcal
E}_{1,1,1}  \subset
  \mathbb{R} \times \mathbb{C}$.
We construct  a finite automaton characterizing its boundary.
As a consequence we prove that ${\mathcal
E}_{1,1,1}$ has $18$ neighborhoods where $6$ of them intersect the central tile ${\mathcal
E}_{1,1,1}$ in a point.
We also prove that the boundary can be generated by two subregions.
More precisely, the boundary of  ${\mathcal
E}_{1,1,1}$ is $ \bigcup_{i=1}^{18} X_i$ where $X_i,\; i=1, \ldots, 6$ are singletons, and for all $ i \in [7,18]$, there exist affine functions $f_{ij},\; j=1, \ldots, m_i$ and $g_{ij},\; j=1, \ldots, n_i$ from $\mathbb{R} \times \mathbb{C} $ to itself such that
$$X_i = \bigcup_{j=1}^{m_{i}}f_{ij} (X_{7}) \bigcup \bigcup_{j=1}^{n_{i}}g_{ij} (X_{8}). $$

\section{Notations, definitions and background}

\subsection{$\beta$-expansions}

Let $\beta >1$ be a real number. A $\beta$-representation of a
nonnegative real number $x $ is an infinite sequence $(x_{i})_{i
\leq k },\; x_{i} \in \mathbb{Z}^{+} = [0,+\infty [$, such that

$$
x=x_{k}\beta^{k}+
x_{k-1}\beta^{k-1}+\cdots +
x_{1}\beta+x_{0}+x_{-1}\beta^{-1}+x_{-2}\beta^{-2}+\cdots .
$$

where $k$ is an  integer.
It is denoted by

$$
x=x_{k}x_{k-1}\ldots x_{1}x_{0}.x_{-1}x_{-2}\ldots .
$$

A particular $\beta$-representation, called the $\beta$-expansion,
 is computed by
the "greedy algorithm" (see \cite{P60}):
 denote by $\lfloor y \rfloor$ and $\{y\}$ respectively the integer part
and the fractional part of a number $y$. There exists $k \in
\mathbb{Z}$ such that $\beta^{k} \leq x <\beta^{k+1}$. Let  $x_k=
\lfloor x/\beta ^{k}\rfloor$ and $r_k = \{ x/\beta ^{k}\}$. Then
for $i < k,$ put $x_i = \lfloor \beta r_{i+1} \rfloor$ and $r_i =
\{\beta r_{i+1}\}$. We get $$x= x_{k} \beta^{k}+ x_{k-1}
\beta^{k-1}+ \cdots$$ If $k <0 \; (x <1)$, we put $x_{0}=x_{-1}=
\cdots = x_{k+1}=0.$ If an expansion ends by infinitely many zeros,
it is said to be finite, and the ending zeros are omitted.

The  digits $x_{i}$ belong to the set  $A=\{0, \cdots, \beta -1\}$
if $\beta$ is an integer, or to the set $A=\{0, \cdots, \lfloor
\beta \rfloor\}$ if $\beta$ is not an integer. The $\beta$-expansion
of every positive real number $x$ is the lexicographically greatest among
all $\beta$-representations of $x$.

We denote by $\mbox{Fin}(\beta)$  the set of numbers which
have finite greedy $\beta$-expansion. Let $N \in \mathbb{Z}$, we
denote by $\mbox{Fin}_{N}(\beta)$  the set of numbers $x$ such that
in their $\beta$-expansion $(x_i)_{i\leq k}$, $x_{i}=0$ for all $i <N$. We will
sometimes denote a $\beta$-expansion $x_{n} \cdots x_{k}, \;n \geq
k$ by $(x_{i})_{k \leq i \leq n}.$
We put

$$
E_{\beta}=
\{ (x_{i})_{i \geq k } ; k \in \mathbb{Z}, \
 \forall n \geq k,\; (x_{i})_{k \leq i \leq n} \mbox { is a
 finite } \beta \mbox {-expansion} \}.$$
In the case where $\beta$ is the dominant root of the polynomial $P_{a_1, \ldots a_d}$,
it is known (see \cite{FS92}) that $E_{\beta}= \Dgeninf$.
We will need the two following classical lemmas.

\begin{lemma}
[\cite{P60}]
\label{lem-lex}
Let $x_{n}\cdots x_{0}$ and $y_{m}\cdots y_{0}$ be two
$\beta$-expansions.
Then, the following are equivalent
\begin{itemize}
\item
 $\sum_{i=0}^{n}x_{i}\beta^{i} <
\sum_{i=0}^{m}y_{i}\beta^{i} $,
\item
$x_{n}\cdots x_{0}
<_{lex} y_{m}\cdots y_{0}$,
\end{itemize}
where $<_{lex}$ is the lexicographical
order.
\end{lemma}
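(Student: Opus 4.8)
The plan is to reduce everything to the single defining property of the greedy algorithm, namely the \emph{tail bound}: if $(x_i)_{0\le i\le n}$ is a $\beta$-expansion, then $\sum_{i=0}^{j}x_i\beta^i<\beta^{j+1}$ for every $0\le j\le n$. I would prove this first, since it is the only point where the greedy nature of the expansion is used and it is exactly what fails for arbitrary $\beta$-representations. It is immediate from the algorithm: a short induction gives the invariant $x=\sum_{i=j+1}^{n}x_i\beta^i+r_{j+1}\beta^{j+1}$, where $r_{j+1}\in[0,1)$ is the remainder produced at step $j+1$, whence $\sum_{i=0}^{j}x_i\beta^i=r_{j+1}\beta^{j+1}<\beta^{j+1}$.

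Next I would dispose of the bookkeeping caused by the two words possibly having different lengths. The leading digit of a $\beta$-expansion is nonzero (as $x_n=\lfloor x/\beta^n\rfloor\ge 1$), so I may pad the shorter word with leading zeros up to the common length $L=\max(n,m)$ without changing either its numerical value or its position in the lexicographic order. Thus I may assume $n=m=L$ and compare two words $x_L\cdots x_0$ and $y_L\cdots y_0$ of equal length.

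The heart of the argument is the forward implication $<_{lex}\Rightarrow$ numerical $<$. Assume $x_L\cdots x_0<_{lex}y_L\cdots y_0$ and let $j$ be the largest index with $x_j\ne y_j$, so that $x_i=y_i$ for $i>j$ and $y_j\ge x_j+1$. Then, using $y_i\ge 0$ to drop the nonnegative terms and the tail bound applied to the expansion of $x$,
$$
\sum_{i=0}^{L}y_i\beta^i-\sum_{i=0}^{L}x_i\beta^i
=(y_j-x_j)\beta^j+\sum_{i=0}^{j-1}(y_i-x_i)\beta^i
\ge \beta^j-\sum_{i=0}^{j-1}x_i\beta^i>\beta^j-\beta^j=0,
$$
so the value of $y$ strictly exceeds that of $x$ (the case $j=0$ being the trivial one with an empty lower sum).

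Finally I would close the equivalence by trichotomy. After padding, the lexicographic order on the two equal-length words is total, and identical words trivially have identical values. Applying the forward implication to both $(x,y)$ and $(y,x)$ shows that $<_{lex}$ and $>_{lex}$ force the two values to be strictly ordered in the same way, while $=_{lex}$ forces equality; since these three alternatives are exhaustive and mutually exclusive on each side, the numerical inequality holds if and only if the lexicographic one does. The only genuine obstacle is the tail bound of the first paragraph; everything after it is elementary order bookkeeping.
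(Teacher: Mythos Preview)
Your argument is correct. The tail bound $\sum_{i=0}^{j}x_i\beta^i<\beta^{j+1}$ is exactly the characterising inequality of the greedy expansion, your derivation of it from the remainder invariant is fine, and the chain
\[
\sum_i y_i\beta^i-\sum_i x_i\beta^i=(y_j-x_j)\beta^j+\sum_{i<j}(y_i-x_i)\beta^i\ge \beta^j-\sum_{i<j}x_i\beta^i>0
\]
is clean. The padding step and the trichotomy closure are routine; just be explicit that the lexicographic order intended here is the one obtained after left-padding with zeros (equivalently, comparing as left-infinite digit strings), which is consistent with how the paper uses $<_{lex}$ elsewhere.

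As for comparison with the paper: there is nothing to compare. The paper does not prove this lemma; it is stated with a citation to Parry~\cite{P60} and used as a black box. Your write-up is essentially the standard proof one finds in the $\beta$-expansion literature, so it fits seamlessly as a self-contained justification of the cited result.
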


\begin{lemma}
[\cite{FS92}]
\label{fina}
If $\beta= \beta_1$, then $\mathbb{Z}[\beta]\cap [0, +\infty [ \subset  \hbox{\rm Fin}(\beta)$.
\end{lemma}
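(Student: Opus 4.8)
The plan is to use that $\beta=\beta_1$ is a Pisot unit and to normalise an arbitrary integer representation of $x$ into a finite admissible one; Lemma \ref{fina} is the non-trivial half of the finiteness property (F) of $\beta$, the reverse inclusion $\mbox{Fin}(\beta)\subseteq\ZZ[\beta]\cap[0,+\infty[$ being immediate since $\beta$ is a unit.

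First I would record the structural facts. The polynomial $P_{a_1,\dots,a_d}$ has constant term $-1$, so $\beta$ is an algebraic unit: $\beta^{-1}\in\ZZ[\beta]$, hence $\ZZ[\beta]=\ZZ[\beta,\beta^{-1}]$, and every $x\in\ZZ[\beta]\cap[0,+\infty[$ can be written as a \emph{finite} sum $x=\sum_i b_i\beta^{i}$ with $b_i\in\ZZ$ and $i$ ranging over a finite set of (possibly negative) integers. Moreover, as recalled in the excerpt, the conjugates $\beta_2,\dots$ all have modulus $<1$, so $\beta$ is Pisot; dividing $\beta^{d+1}=a_1\beta^{d}+\cdots+a_d\beta+1$ by $\beta^{d+1}$ gives $1=a_1\beta^{-1}+\cdots+a_d\beta^{-d}+\beta^{-(d+1)}$, i.e. the $\beta$-expansion of $1$ is the finite word $a_1a_2\cdots a_d1$, the monotonicity $a_1\ge\cdots\ge a_d\ge1$ being precisely what makes that word admissible; by Parry's theorem, membership of a word in $\Dgen$ is then governed by condition (2), namely that each of its length-$(d+1)$ windows be $<_{lex}a_1a_2\cdots a_d1$ (condition (1) is the digit bound and, for a finite word, condition (3) is automatic). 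It is therefore enough to rewrite the integer representation $(b_i)$ of $x$ as a finite word in $\Dgen$: by Parry's characterisation of $\beta$-expansions (cf. Lemma \ref{lem-lex}) that word is then the greedy $\beta$-expansion of $x$, so $x\in\mbox{Fin}(\beta)$.

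For the normalisation I would use the rewriting rules coming from the relation $\beta^{i}=a_1\beta^{i-1}+a_2\beta^{i-2}+\cdots+a_d\beta^{i-d}+\beta^{i-d-1}$, valid for every $i\in\ZZ$: a single digit $1$ at position $i$ may be traded for the block $(a_1,a_2,\dots,a_d,1)$ at positions $(i-1,\dots,i-d-1)$, and conversely. With these one can successively (a) lift every digit sitting at a negative position up to non-negative positions, (b) bring every digit into $\{0,1,\dots,a_1\}$ by carrying excess upward and borrowing from above, and (c) destroy every forbidden factor $a_1a_2\cdots a_de$ with $e\ge1$. As a check that the output has the right shape, the Pisot property alone already forces the greedy expansion of $x$ to be eventually periodic: the greedy remainders lie in $\ZZ[\beta]\cap[0,1)$ and, under any Galois conjugation $\sigma$, the numbers $|\sigma(r_n)|$ are bounded uniformly in $n$ (because $|\sigma(\beta)|<1$), so the $r_n$ remain in a bounded --- hence finite --- subset of the lattice $\ZZ[\beta]$.

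The main obstacle is to show that the normalisation can be organised so as to terminate --- equivalently, to upgrade ``eventually periodic'' to ``finite'', i.e. to show that the eventual cycle of remainders above reduces to $\{0\}$. This is exactly where the decreasing-coefficients hypothesis $a_1\ge a_2\ge\cdots\ge a_d\ge1$ is indispensable, and the argument is genuinely delicate: a single carry can temporarily enlarge the digit sum or widen the support, so one must order and combine the rewriting steps carefully and exhibit a suitable non-negative integer functional on representations that strictly decreases at each step. This is precisely what Frougny--Solomyak do; their theorem on finite $\beta$-expansions applies verbatim here, the coefficient string $a_1,\dots,a_d,1$ being non-increasing with last entry $1\ge1$. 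Once termination is in hand one obtains a finite word in $\Dgen$ representing $x$, i.e. $x\in\mbox{Fin}(\beta)$, which proves Lemma \ref{fina}; accordingly, in the paper it suffices to cite \cite{FS92}.
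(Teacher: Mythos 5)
Your proposal is correct and takes the same route as the paper: Lemma \ref{fina} is exactly the Frougny--Solomyak finiteness theorem applied to $P_{a_1,\dots,a_d}$, whose coefficient string $a_1\ge a_2\ge\cdots\ge a_d\ge 1$ (followed by the constant term $1$) satisfies their hypothesis, and the paper, like you, leaves the delicate termination/normalisation argument entirely to \cite{FS92}. Your preliminary verifications (that $\beta$ is a unit so $\mathbb{Z}[\beta]=\mathbb{Z}[\beta^{-1}]$, and that the word $a_1\cdots a_d1$ is the admissible expansion of $1$) are accurate and consistent with how the lemma is used in the paper.
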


\subsection{Boundary of $\Egen$}

The coordinates of $\alpha$ have modulus strictly less than $1$.
Moreover, $0$ belongs to the interior of  $\Egen$ (\cite{A00}, see
also \cite{R82} for $\Er$). Hence, for all $z \in \mathbb{R}^{r-1}
\times \mathbb{C}^{s}$ there exists $k\in \NN$ such that
$\alpha^{k}z \in \Egen$.
Then, all $z\in \mathbb{R}^{r-1} \times
\mathbb{C}^{s}$ can be written as follows $ z= \sum_{i=l}
^{\infty}\varepsilon_{i}\alpha^{i} $, where $l \in \ZZ$ and
$(\varepsilon_{i})_{i \geq l} \in \Dgeninf$.
The sequence
$(\varepsilon_{i})_{i \geq l}$ is called $\alpha$-expansion of $z$.
We should remark that these $\alpha$-expansions are not unique :
some $z$ can have many different $\alpha$-expansions.
In \cite{M05}
it is proven that the points belonging to the boundary of $\Egen$
have at least two different $\alpha$-expansions.
These points are
characterized by the following theorem which is a straightforward
consequence of a result due to W. Thurston \cite{T90} (see also
\cite{M05}).

\begin{theo}
\label{auto}
There exists a finite automaton $B$ such that for all
distinct elements of $\Dgeninf$, $(b_{i})_{i \geq l}$ and $(c_{i})_{i \geq
l}$, the following are equivalent :

\begin{itemize}
\item
$\sum_{i=l}^{\infty}b_{i}\alpha^{i}
=\sum_{i=l}^{\infty}c_{i}\alpha^{i}$
\item
$((b_{i}, c_{i}))_{i\geq l}$ is recognizable by $B$ (i.e an infinite path in $B$ beginning in the initial state).
\end{itemize}
\end{theo}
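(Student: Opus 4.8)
The plan is to follow the route of W.~Thurston \cite{T90} and A.~Messaoudi \cite{M00,M05}: convert the equality of two $\alpha$-expansions into the vanishing of a single ``difference series'', and show that this vanishing is recognised by a sequence of states that is forced to live in a bounded, hence finite, subset of the ring $\ZZ[\beta]$.

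\textbf{Reduction to a difference series.} Since each coordinate of $\alpha$ is invertible, for distinct $(b_i)_{i\geq l},(c_i)_{i\geq l}\in\Dgeninf$ the identity $\sum_{i\geq l}b_i\alpha^i=\sum_{i\geq l}c_i\alpha^i$ is equivalent to $\sum_{i\geq 0}d_i\alpha^i=0$, where $d_i=b_{i+l}-c_{i+l}\in\{-a_1,\dots,a_1\}$. Let $\phi\colon\QQ(\beta)\to\RR^{r-1}\times\CC^{s}$ be the map induced by the embeddings $\beta\mapsto\beta_j$, $j=2,\dots,r+s$, so that $\alpha^i=\phi(\beta^i)$. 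Since $P_{a_1,\dots,a_d}(0)=-1$, the number $\beta$ is a unit, hence $\beta^{-1}\in\ZZ[\beta]$; moreover $\ZZ[\beta]$, embedded via all of its archimedean places into the Minkowski space $\RR^{r}\times\CC^{s}$ of $\QQ(\beta)$, is discrete (a full-rank lattice), so any bounded subset of it is finite.

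\textbf{The states.} For $n\geq 0$ put $s_n=\sum_{i=0}^{n-1}d_i\beta^{\,i-n}\in\ZZ[\beta]$, so that $s_0=0$ and $s_{n+1}=\beta^{-1}(s_n+d_n)$: the transition depends only on the last digit pair read. Setting $\widetilde R_n=\sum_{i\geq n}d_i\alpha^{\,i-n}$, whose every coordinate has modulus at most $a_1/(1-\rho)$ with $\rho=\max_{2\leq k\leq r+s}|\beta_k|<1$, one has the telescoping identity $\sum_{i\geq 0}d_i\alpha^i=\alpha^{\,n}\bigl(\phi(s_n)+\widetilde R_n\bigr)$ for all $n$. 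I would then establish the key equivalence
$$\sum_{i\geq 0}d_i\alpha^i=0\quad\Longleftrightarrow\quad s_n\in F\ \text{ for every }\ n\geq 0,$$
where $F=\bigl\{x\in\ZZ[\beta]\ :\ |x|\leq a_1/(\beta-1)\ \text{ and }\ \|\phi(x)\|_\infty\leq a_1/(1-\rho)\bigr\}$ is finite by the previous paragraph. Indeed, if the series vanishes, the identity forces $\phi(s_n)=-\widetilde R_n$, which is bounded, while the $\beta$-image of $s_n$ always satisfies $|s_n|\leq a_1\sum_{k\geq 1}\beta^{-k}=a_1/(\beta-1)$, so $s_n\in F$; conversely, if $s_n\in F$ for all $n$, then $\phi(s_n)+\widetilde R_n$ is bounded uniformly in $n$ while $\alpha^{\,n}\to 0$, and the identity yields $\sum_{i\geq 0}d_i\alpha^i=0$.

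\textbf{Assembling the automaton.} Finally I would take $B$ to have state set the part of $F$ reachable from $0$, initial state $0$, and a transition from $x$ to $\beta^{-1}(x+b-c)$ labelled by $(b,c)$ whenever $b,c\in\{0,\dots,a_1\}$ and $\beta^{-1}(x+b-c)\in F$; to make $B$ accept only admissible pairs one may further take the product with the finite Parry automaton recognising $\Dgeninf$ \cite{FS92}. This automaton is finite and deterministic, and by the key equivalence an infinite path in $B$ from $0$ labelled $((b_i,c_i))_{i\geq l}$ exists if and only if the states $s_n$ all lie in $F$, that is, if and only if $\sum_{i\geq l}b_i\alpha^i=\sum_{i\geq l}c_i\alpha^i$. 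The main obstacle is the forward direction of the key equivalence: because the recursion $s_{n+1}=\beta^{-1}(s_n+d_n)$ is \emph{expanding} on the complex coordinates, the boundedness of $\phi(s_n)$ is not automatic, and one genuinely needs the hypothesis $\sum d_i\alpha^i=0$ together with the a~priori boundedness of the tails $\widetilde R_n$; the remaining delicate point is the discreteness of $\ZZ[\beta]$ in the Minkowski space, which is what makes $F$, and hence $B$, finite.
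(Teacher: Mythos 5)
Your argument is correct: the reduction to the vanishing of the difference series, the state recursion $s_{n+1}=\beta^{-1}(s_n+d_n)$ inside $\ZZ[\beta]$ (legitimate since $\beta$ is a unit), the telescoping identity $\sum_{i\geq 0}d_i\alpha^i=\alpha^{n}\bigl(\phi(s_n)+\widetilde R_n\bigr)$, the two-sided boundedness (automatic at the dominant place, forced by the hypothesis at the contracting places), and the finiteness of $F$ via discreteness of the order $\ZZ[\beta]$ in Minkowski space together give exactly the stated equivalence; the only point you leave implicit is the irreducibility of $P_{a_1,\dots,a_d}$ (needed so that $\beta_2,\dots,\beta_{r+s}$ really define embeddings of $\QQ(\beta)$ and so that the lattice has full rank), which follows at once from $P(0)=-1$ and the fact that all non-dominant roots have modulus less than $1$. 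Note, however, that the paper itself does not prove Theorem \ref{auto}: it invokes it as a consequence of Thurston \cite{T90} (see also \cite{M05}) and explicitly remarks that this route does not exhibit the states. So your proposal supplies the standard proof lying behind that citation, while the paper's genuine technical work is of a different nature: in Proposition \ref{egalite} the authors determine the state set $S$ explicitly for the case $a_1=a_2=a_3=1$, replacing the pigeonhole/discreteness step by the finiteness property of Lemma \ref{fina} (Frougny--Solomyak, \cite{FS92}), which converts each difference $A_k$ into a finite $\beta$-expansion, combined with numerical bounds on the conjugates to eliminate candidate states one by one. Your approach buys generality and a self-contained existence proof valid for all $\Dgeninf$, but yields only an abstract finite bound on the states; the paper's approach is tailored to producing the explicit automaton on which all the subsequent boundary analysis (the $18$ neighbours, the IFS decomposition) depends.
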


The proof of this result does not give explicitly the states of the
automaton. In \cite{M98} is given an algorithm that gives these
states for ${\mathcal E}_{1,1}$.
In \cite{M06}, they
were given for ${\mathcal E}_{a_1,1}$ where $a_1 \geq 2$.

\section{Characterization of the boundary of $\Eart$}

In the sequel we suppose $d=3$ and $a_1=a_2=a_3=1$, and
$P(x)=P_{1,1,1} (x)= x^4-x^3-x^2-x-1=
(x-\beta_1)(x-\beta_2)(x-\beta_3)(x-\overline{\beta_3})$ where
$\beta_1,\beta_2,\beta_3$ are defined in Section \ref{intro}.
Approximations of these numbers are $\beta = \beta_1 = 1.9275 \dots ,\;
\beta_2 = -0.7748 \dots $ and $\beta_3 = -0.0763 \dots +i0.8147\dots $.
We recall that we defined for all $i\in \ZZ$, $\alpha^i=
(\beta_{2}^{i}, \beta_{3}^{i}).$

In this situation

\begin{align*}
\D={\mathcal D}_{1,1,1} &
=
\left\{
(\varepsilon_i)_{l \leq i \leq n} ; l,n\in \ZZ, \varepsilon_i\in \{ 0,1 \}, \varepsilon_i \varepsilon_{i-1}
\varepsilon_{i-2} \varepsilon_{i-3} \ne 1111,  l\leq i\leq n
\right\} ,\\
\D^\infty=
\D_{1,1,1}^\infty & = \left\{ (\varepsilon_i)_{i\geq l} ;
l\in \ZZ , (\varepsilon_{i})_{l \leq i \leq n} \in \D_{1,1,1},
 n\geq l \right\} \hbox{ and } \\
\E =
{\mathcal E}_{1,1,1} & =
\left\{ \sum_{i=4}^{+\infty}
\varepsilon_{i}\alpha^{i} ; (\varepsilon_i)_{i \geq 4} \in \D^\infty \right\}\\
&
= \left\{ \sum_{i=4}^{+\infty}
\varepsilon_{i}\alpha^{i} ; \varepsilon_i \in \{0, 1\},
\varepsilon_i \varepsilon_{i-1} \varepsilon_{i-2} \varepsilon_{i-3}
\ne 1111 , i \geq 4\right\} .
\end{align*}

An important and known result is:

\begin{prop}
\label{pavage}
The set ${\mathcal E}$ is compact,
connected and generates a periodic tiling of $\RR \times
\CC$ with group periods $ G= \mathbb{Z} \alpha^{0}+
\mathbb{Z} \alpha +  \mathbb{Z} \alpha^{2}$:

$$
\mathbb{R} \times \mathbb{C}=
\bigcup_{p \in G}({\mathcal E}+p) ,
$$

and the Lebesgue measure of $(\E+p) \cap (\E +q)$ is zero whenever $p\not =q$, $p,q \in G$.
\end{prop}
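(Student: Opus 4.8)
I would establish the four assertions in turn: compactness is elementary, connectedness follows from a set equation, and the tiling statement is the only real difficulty, for which I would appeal to the general theory of Rauzy fractals specialised to the substitution $\sigma_3$ (i.e.\ $\sigma_d$ for $d=3$, whose characteristic polynomial is $P$). For compactness, the idea is to realise $\E$ as a continuous image of a compact space. As the displayed formula for $\D$ makes clear, when $a_1=a_2=a_3=1$ the Parry conditions reduce to forbidding the factor $1111$, so the set of admissible tails $\{(\varepsilon_i)_{i\ge4} : (\varepsilon_i)_{i\ge4}\in\D^{\infty}\}$ is exactly the subshift of finite type $\Sigma\subset\{0,1\}^{\{4,5,\dots\}}$ avoiding $1111$, which is closed in the product topology, hence compact. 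Since $|\beta_2|<1$ and $|\beta_3|<1$, the partial sums of $\sum_{i\ge4}\varepsilon_i\alpha^i$ converge uniformly on $\Sigma$, so $\varphi:(\varepsilon_i)_{i\ge4}\mapsto\sum_{i\ge4}\varepsilon_i\alpha^i$ is continuous and $\E=\varphi(\Sigma)$ is compact. (That $0$ lies in the interior of $\E$ is already on record, from \cite{A00}.)

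For connectedness I would first extract a set equation by conditioning on the most significant digit $\varepsilon_4$ and recording the automaton state $q\in\{0,1,2,3\}$ equal to the length of the run of ones just read. Writing $\E^{(0)}=\E$ and, for $q=1,2,3$, letting $\E^{(q)}$ be the subset of $\RR\times\CC$ obtained from strings that may be prefixed by a block of $q$ ones without creating $1111$, one gets the contracting relations
$$\E^{(q)}=\bigl(\alpha\,\E^{(0)}\bigr)\cup\bigl(\alpha^{4}+\alpha\,\E^{(q+1)}\bigr)\ \ (q=0,1,2),\qquad \E^{(3)}=\alpha\,\E^{(0)},$$
where $\alpha\cdot(x,z):=(\beta_2x,\beta_3z)$ is a contraction of $\RR\times\CC$. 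The underlying four-state graph has a loop at $0$ and an edge from every state to $0$, so it is primitive; thus $\E$ is the attractor of a primitive graph-directed iterated function system, which incidentally re-proves compactness and, through a standard adjacency criterion for graph-directed self-affine sets, yields connectedness of $\E$. The overlaps between the pieces $\alpha\E^{(0)}$ and $\alpha^{4}+\alpha\E^{(q+1)}$ that feed that criterion come from elements of $\ZZ[\beta]\cap[0,+\infty[$ admitting two distinct finite expansions (Lemma \ref{fina}), hence two $\alpha$-expansions representing the same point (Theorem \ref{auto}); for the details I would follow \cite{R82} (for $\Er$) and \cite{AI01,A00}.

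The tiling and the null-overlap assertion form the hard part, and here I would invoke rather than reprove. The substitution $\sigma_3$ is unimodular (its incidence matrix has determinant $\pm1$) and Pisot ($\beta$, the unique root of $P$ outside the closed unit disk, is a Pisot number), so the construction of \cite{AI01} (see also \cite{CS01,A00}) attaches to it a self-replicating tiling of $\RR\times\CC$ by translated copies of the subtiles; after verifying the combinatorial coincidence condition --- which holds here and is a finite check on the prefix--suffix automaton of $\sigma_3$ --- this upgrades to a genuine periodic tiling by the translates $\E+p$, $p\in G=\ZZ\alpha^0+\ZZ\alpha+\ZZ\alpha^2$ (the vectors $\alpha^0,\alpha,\alpha^2$ being $\RR$-linearly independent in $\RR\times\CC\cong\RR^3$, so that $G$ is a lattice). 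Then $\RR\times\CC=\bigcup_{p\in G}(\E+p)$, and for distinct $p,q\in G$ the intersection $(\E+p)\cap(\E+q)$ is contained in the boundary of the tiling, which is Lebesgue-null, so it has measure zero. The sole genuine obstacle is this tiling/coincidence property; once it is granted, everything else is immediate, which is exactly what frees the rest of the paper to concentrate on describing the boundary pieces $X_i$ explicitly.
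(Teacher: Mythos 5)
Your proposal is correct in substance and matches the paper's approach: the paper's entire proof is the one-line remark that the statement ``can easily be deduced from \cite{R82} or \cite{AI01}'', and you likewise delegate the tiling and measure-disjointness to the Rauzy/Arnoux--Ito theory (with \cite{A00} for the finiteness/interior-point input), adding only routine details for compactness and the graph-directed set equations. So you have reproduced the paper's argument, just with more explicit bookkeeping than the authors chose to give.
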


\begin{proof}
The proof can easily be deduced from \cite{R82} or \cite{AI01}.
\end{proof}

\subsection{Definition of the automaton recognizing the points with at least two expansions}
\label{def-auto}

In the sequel we proceed to the construction of the automaton $\A$ that
 characterize the boundary of $\E$.
This characterization will be proven in Section \ref{subsec-characterization}.

The set of states of the automaton $\A$ is

$$
\begin{array}{lll}
S
= & &
\left\{ \pm \sum_{i=0}^{3}c_{i}\alpha^{i} ; c_{0}c_{1}c_{2}c_{3}
\ne 1111 , c_i \in \{ 0,1 \} ,  0\leq i \leq 3 \right\}  \\
& \bigcup &
\left\{
\pm (\alpha^{-1}+1 + \alpha^2),
\pm (\alpha^{-2}+\alpha^{-1} + \alpha),
\pm (\alpha^{-3}+\alpha^{-2} +1+ \alpha^3)
\right\} .
\end{array}
$$

Let $s$ and $t$ be two states. The set of edges is the set of
$(s,(a,b),t)\in S \times \{ 0,1 \}^2 \times S$ satisfying $t = \frac
{s}{\alpha} + (a-b) \alpha^{3}$. The set of initial states is $\{
0 \}$ and the set of states is $S$.
A path (resp. infinite path) of $\A$ is a sequence $(a_n,b_n)_{k\leq n\leq l}$ (resp. $(a_n,b_n)_{n\geq k}$) such that there exists a sequence
$(e_n)_{k\leq n\leq l+1}$ (resp. $(e_n)_{n\geq k}$) of elements of $S$ for which $(e_n , (a_n , b_n) , e_{n+1})$ belongs to $S$ for all $n\in \{ k,k+1, \dots , l+1\}$
(resp. $n\geq k$). We say it starts in the initial state when $e_k = 0$.
The automaton is explicitly defined in the Annexe at the end of this paper.

Let us explain the behavior of this automaton.
Let $\varepsilon = (\varepsilon_{i})_{i\geq l}$ and $\varepsilon' = (\varepsilon'_{i})_{i\geq l}$ belonging to $\Dinfty$,
$x=\sum_{i=l}^{\infty}\varepsilon_{i}\alpha^{i}$ and
$y=\sum_{i=l}^{\infty}\varepsilon'_{i}\alpha^{i}$.
For all
$k\geq l$ we set

\begin{align}
\label{def-Ak}
A_k (\varepsilon ,\varepsilon')
=
\alpha^{-k+3}\sum_{i=l}^{k}(\varepsilon_{i}-\varepsilon'_{i})\alpha^{i}
\end{align}

In Subsection \ref{subsec-characterization}
we will prove that $x=y$ if and only if all the $A_k$, $k\geq l$, belong to $S$.
But as, for
all $k\geq l$, we have

\begin{eqnarray}
\label{rel-Ak}
A_{k+1} (\varepsilon ,\varepsilon') = \frac {A_{k}(\varepsilon ,\varepsilon')}{\alpha} + (\varepsilon_{k+1}- \varepsilon'_{k+1})
\alpha^{3} ,
\end{eqnarray}

this means that $x=y$ if and only if

$$
\left(0, \left(\varepsilon_l , \varepsilon'_l \right) ,A_l \left(\varepsilon ,\varepsilon'\right)\right)
\left( (A_k (\varepsilon ,\varepsilon'), (\varepsilon_{k+1} , \varepsilon'_{k+1} ),A_{k+1} (\varepsilon ,\varepsilon')) \right)_{k\geq l}
$$

is an infinite sequence of edges of $S$ starting in the initial state.
And, this is equivalent to say that $(\varepsilon_i , \varepsilon'_i)_{i\geq l}$ is an infinite path of $\A$ starting in the initial state.

Let us give an example on how we can use this automaton to obtain information
about the digits of $x$ and $y$.
Let $s$ be the smallest integer such that $\varepsilon_{s} \ne \varepsilon'_{s}$.
Hence $A_{i} (\varepsilon ,\varepsilon') = 0$
for $i \in \{l, \cdots, s-1\}$.
Suppose  $\varepsilon_{s} >\varepsilon'_{s}$, that is $\varepsilon_s = 1 $ and $\varepsilon'_s = 0$.
Then,
$A_{s}= \alpha^{3}$.
From \eqref{rel-Ak} we deduce $ A_{s+1}(\varepsilon ,\varepsilon') =
\alpha^{2} + (\varepsilon_{s+1}- \varepsilon'_{s+1}) \alpha^{3}$ which should belong to $S$.
Hence
$A_{s+1} (\varepsilon ,\varepsilon')=\alpha^{2}\in S$ if $\varepsilon_{s+1}= \varepsilon'_{s+1}$, and,
$A_{s+1} (\varepsilon ,\varepsilon')=\alpha^{2}+ \alpha^{3}\in S$ if $(\varepsilon_{s+1}, \varepsilon'_{s+1})= (1,0)$.
Hence, $(\alpha^3 , (1,0) , \alpha^2+\alpha^3 )$, $(\alpha^3 , (0,0) , \alpha^2 )$ and $(\alpha^3 , (1,1) , \alpha^2 )$
are edges coming from the state $\alpha^3$.
Let us explain why $(\alpha^3 , (0,1) , \alpha^2-\alpha^3 )$ is not an edge, and hence why we cannot have
$(\varepsilon_{s+1},\varepsilon'_{s+1}) = (0,1)$.
We should have that $\alpha^2-\alpha^3 = -\alpha^{-1} - 1-\alpha$ belongs to $S$.
Then $\beta$ should satisfies the same equality.
Hence $\beta^{-1} +1+\beta$ should belong to

\begin{align*}
&\left\{ \sum_{i=0}^{3}c_{i}\beta^{i} ; c_{0}c_{1}c_{2}c_{3}
\ne 1111 , c_i \in \{ 0,1 \} ,  0\leq i \leq 3 \right\}  \\
 \bigcup &
\left\{
(\beta^{-1}+1 + \beta^2),
(\beta^{-2}+\beta^{-1} + \beta),
(\beta^{-3}+\beta^{-2} +1+ \beta^3)
\right\} ,
\end{align*}

which is not possible by Lemma \ref{lem-lex}.

\subsection{Characterization of the points with at least two expansions}
\label{subsec-characterization}

\begin{lemma}
\label{maple}
Let $(\varepsilon_{i})_{i \geq 0},(\varepsilon_{i}')_{i \geq 0} \in \Dinfty$.
Then,
\begin{align*}
\left|
\sum_{i=0}^{+ \infty}(\varepsilon_{i}-\varepsilon_{i}') \beta_2^{i}
\right|
\leq \frac{1}{1+\beta_2}
, \ \
\left|
\sum_{i=0}^{+ \infty}(\varepsilon_{i}-\varepsilon_{i}')\beta_3^{i}
\right|
 \leq  \frac{C}{1 - \vert \beta_3 \vert^{6}} .
\end{align*}
where $C= \max
\left
\{
\left| \sum_{i=0}^{5} (c_i-d_i) \beta_3^{i}
\right|
; (c_i)_{0\leq i\leq 5} \in \D , ( d_i)_{0\leq i\leq 5} \in \D
\right\}$.
\end{lemma}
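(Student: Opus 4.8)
The plan is to bound each of the two sums separately using the structure of $\D$, namely the constraint $\varepsilon_i\varepsilon_{i-1}\varepsilon_{i-2}\varepsilon_{i-3}\neq 1111$.

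For the first sum, involving $\beta_2 \approx -0.7748$, note that $\beta_2 \in (-1,0)$. Writing $\delta_i = \varepsilon_i - \varepsilon_i' \in \{-1,0,1\}$, the worst case for $\left|\sum_{i\ge 0}\delta_i\beta_2^i\right|$ is obtained by taking $\delta_i$ to always have the same sign as $\beta_2^i$, i.e. $\delta_i = (-1)^i$ (up to a global sign), which gives $\sum_{i\ge 0}|\beta_2|^i = \frac{1}{1-|\beta_2|} = \frac{1}{1+\beta_2}$ (using $\beta_2 < 0$, so $1+\beta_2 = 1 - |\beta_2|$). I would verify that this choice is actually realizable in $\D^\infty$: the sequence $(\varepsilon_i)$ with $\varepsilon_i = 1$ for $i$ even and $0$ for $i$ odd, paired with its complement, avoids $1111$, so the bound is attained. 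Hence the first inequality follows from the triangle inequality: $\left|\sum_{i\ge 0}\delta_i\beta_2^i\right| \le \sum_{i\ge 0}|\beta_2|^i = \frac{1}{1+\beta_2}$.

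For the second sum, involving $\beta_3$ with $|\beta_3| \approx 0.8187 < 1$, I would group the terms in blocks of six and use the $1111$-avoidance constraint to control each block. Write $\sum_{i\ge 0}\delta_i\beta_3^i = \sum_{k\ge 0}\beta_3^{6k}\left(\sum_{j=0}^{5}\delta_{6k+j}\beta_3^{j}\right)$. Each inner sum is of the form $\sum_{j=0}^5 (c_j - d_j)\beta_3^j$ with $(c_j), (d_j)$ subwords of length $6$ of elements of $\D^\infty$, hence in $\D$; therefore each inner sum has modulus at most $C$ by definition of $C$. Applying the triangle inequality over $k$ gives $\left|\sum_{i\ge 0}\delta_i\beta_3^i\right| \le \sum_{k\ge 0}|\beta_3|^{6k}\cdot C = \frac{C}{1 - |\beta_3|^6}$, which is the claimed bound.

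The routine part here is checking that consecutive length-$6$ windows of a $\D^\infty$-sequence are themselves in $\D$ — this is immediate since $\D$ is defined by the local condition of avoiding the factor $1111$, which is inherited by any factor. The only genuinely delicate point is the sharpness claim implicit in the first bound (that $\frac{1}{1+\beta_2}$ is the right constant and not merely an upper bound); since the statement only asserts the inequality $\le$, the triangle-inequality argument suffices and no optimization over $\D^\infty$ is actually needed for correctness. The constant $C$ in the second bound is left as a finite maximum over a finite set (at most $2^{12}$ pairs of words, fewer after removing those containing $1111$), which can be computed explicitly — the paper's use of the label \texttt{maple} suggests this is done by computer algebra. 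I expect no serious obstacle; the main thing to get right is the block-decomposition index bookkeeping and the observation that the factor condition passes to subwords.
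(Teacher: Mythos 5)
Your proof is correct and essentially the same as the paper's: for the $\beta_3$ bound, the block-of-six decomposition plus the triangle inequality is exactly the argument the paper leaves implicit as ``easy to establish'' (the shape of the constant $C/(1-|\beta_3|^6)$ comes from precisely this grouping, and factors of a $1111$-avoiding sequence are indeed in $\D$). For the $\beta_2$ bound, your direct triangle-inequality estimate $\sum_{i\geq 0}|\beta_2|^i=\frac{1}{1-|\beta_2|}=\frac{1}{1+\beta_2}$ yields the same constant as the paper's two-sided bound $\frac{\beta_2}{1-\beta_2^2}\leq \sum_i c_i\beta_2^i\leq \frac{1}{1-\beta_2^2}$ applied to each series separately, and, as you note, the sharpness remark is not needed for the stated inequality.
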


\begin{proof}
The second inequality is easy to establish.
For the first inequality, as $-1 <\beta_2 <0$, all sequences $(c_i)_{i \geq 0}$ which terms
are $0$ or $1$  satisfy the following inequality~:

$$
\frac{\beta_2}{1-\beta_2^2}=\sum_{i=0}^{+\infty} \beta_2 ^{2i+1}
\leq \sum_{i=0}^{+\infty} c_i \beta_{2}^{i}   \leq
\sum_{i=0}^{+\infty} \beta_2^{2i} = \frac{1}{1-\beta_2^2}.
$$

This achieves the proof.
\end{proof}

For all $\varepsilon = (\varepsilon_{i})_{i \geq l}$ and
$\varepsilon'=(\varepsilon'_{i})_{i \geq l}$ belonging to $\D^\infty$, we set

$$
S(\varepsilon, \varepsilon')=\{ A_k (\varepsilon,\varepsilon') ; k\geq l \}
=
\left\{
\alpha^{-k+3}\sum_{i=l}^{k}(\varepsilon_{i}-\varepsilon'_{i})\alpha^{i} ;
k \geq l
\right\} .
$$

\begin{prop}
\label{egalite}
Let $x=\sum_{i=l}^{\infty}\varepsilon_{i}\alpha^{i}$,
$y=\sum_{i=l}^{\infty}\varepsilon'_{i}\alpha^{i}$, where
 $\varepsilon = (\varepsilon_{i})_{i \geq l}$ and
$\varepsilon' =(\varepsilon'_{i})_{i \geq l}$ belong to $\D^\infty$.
Then, $x=y$ if
and only if the set $S(\varepsilon, \varepsilon')$ is finite.
Moreover

\begin{align*}
S(\varepsilon ,\varepsilon')
\subset
S
= &
\left\{ \pm \sum_{i=0}^{3}c_{i}\alpha^{i} ; (c_{i})_{0\leq i\leq 3}
\in \D \right\}  \\
& \bigcup
\left\{
\pm (\alpha^{-1}+1 + \alpha^2),
\pm (\alpha^{-2}+\alpha^{-1} + \alpha),
\pm (\alpha^{-3}+\alpha^{-2} +1+ \alpha^3)
\right\}
\end{align*}

and

$$
S=\bigcup_{(\varepsilon ,\varepsilon')\in \Delta  } S (\varepsilon ,\varepsilon') ,
$$

where $\Delta = \left\{ \left((\varepsilon_i)_{i\geq l}, (\varepsilon'_i)_{i\geq l}\right) \in \Dinfty\times \Dinfty ; \sum_{i=l}^{\infty}\varepsilon_{i}\alpha^{i}=\sum_{i=l}^{\infty}\varepsilon'_{i}\alpha^{i}  \right\}$.
\end{prop}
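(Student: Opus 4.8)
The plan is to establish three things: (1) if $S(\varepsilon,\varepsilon')$ is finite then $x=y$; (2) if $x=y$ then $S(\varepsilon,\varepsilon')$ is finite and in fact contained in the explicit set $S$; and (3) every element of $S$ is realised by some pair in $\Delta$. For direction (1), I would use the recurrence \eqref{rel-Ak}: if $S(\varepsilon,\varepsilon')$ is finite, then since each $A_k$ is obtained from $A_{k-1}$ by dividing by $\alpha$ and adding an element of $\{-\alpha^3,0,\alpha^3\}$, and since multiplication by $\alpha^{-1}$ strictly contracts the coordinates $\beta_2,\beta_3$ (both of modulus $<1$), the finiteness forces the tail $\alpha^{-k+3}\sum_{i=l}^k(\varepsilon_i-\varepsilon'_i)\alpha^i$ to stay bounded; multiplying back by $\alpha^{k-3}$ and letting $k\to\infty$ shows $\sum_{i=l}^\infty(\varepsilon_i-\varepsilon'_i)\alpha^i = 0$ coordinatewise, i.e.\ $x=y$. (Alternatively: a bounded sequence valued in a discrete set $\mathbb{Z}[\alpha^{-1},\alpha]$-translate is eventually periodic, and the only periodic orbit of the map $z\mapsto z/\alpha$ is $0$.)

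For direction (2), the key input is Lemma \ref{maple}. Suppose $x=y$, i.e.\ $\sum_{i=l}^\infty(\varepsilon_i-\varepsilon'_i)\alpha^i=0$. Then for each $k$,
\begin{align*}
A_k(\varepsilon,\varepsilon') = \alpha^{-k+3}\sum_{i=l}^k(\varepsilon_i-\varepsilon'_i)\alpha^i = -\alpha^{-k+3}\sum_{i=k+1}^\infty(\varepsilon_i-\varepsilon'_i)\alpha^i = -\alpha^3\sum_{j=1}^\infty(\varepsilon_{k+j}-\varepsilon'_{k+j})\alpha^j,
\end{align*}
so each coordinate of $A_k$ is a tail sum of the type bounded in Lemma \ref{maple} (after re-indexing, the sequences $(\varepsilon_{k+j})_{j\ge 0}$ and $(\varepsilon'_{k+j})_{j\ge 0}$ still lie in $\Dinfty$). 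This gives an a priori bound on $|A_k|$ in the $\beta_2$-coordinate and in the $\beta_3$-coordinate independent of $k$. Since $A_k$ also lies in the lattice-like set $\{\sum \delta_i\alpha^i : \delta_i\in\{-1,0,1\}\}$ — more precisely it is $\alpha^{3}$ times a number whose $\alpha$-expansion digits are differences of $\D$-admissible digits — the bounded region contains only finitely many such points, so $S(\varepsilon,\varepsilon')$ is finite. The harder, computational part is pinning down that this finite set is \emph{exactly} contained in the displayed $S$: here one argues by the dynamics of \eqref{rel-Ak} starting from $A_l=0$, noting that after the first discrepancy the state is $\pm\alpha^3$, and then checking that the only successors that remain within the Lemma \ref{maple} bounds are the $16$ states $\pm\sum_{i=0}^3 c_i\alpha^i$ with $c_0c_1c_2c_3\ne 1111$ together with the six extra states listed; the exclusion of all other candidates (such as $\alpha^2-\alpha^3$) uses Lemma \ref{lem-lex} exactly as in the worked example of Subsection \ref{def-auto}, since each putative extra state would force $\beta$ to satisfy an impossible lexicographic inequality among its $\beta$-expansions.

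For direction (3), the equality $S=\bigcup_{(\varepsilon,\varepsilon')\in\Delta}S(\varepsilon,\varepsilon')$, the inclusion $\supseteq$ is immediate from (2) applied to each pair in $\Delta$. For $\subseteq$ one must exhibit, for every state $\sigma\in S$, a pair $(\varepsilon,\varepsilon')\in\Delta$ and an index $k$ with $A_k(\varepsilon,\varepsilon')=\sigma$; the natural device is to observe that $\sigma$ has a finite $\alpha$-expansion as a difference of two $\D$-words (this is where Lemma \ref{fina} enters, guaranteeing that elements of $\mathbb{Z}[\beta]\cap[0,\infty[$ have finite greedy expansions, so the two sides can be separately realised as genuine $\D^\infty$-sequences), and then one can prepend suitable common digits and append a common infinite tail so that the two resulting series coincide — i.e.\ one builds a relation $\sum(\varepsilon_i-\varepsilon'_i)\alpha^i=0$ whose $k$-th partial state is precisely $\sigma$. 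Concretely this amounts to checking that each of the $22$ states is "reachable" and "co-reachable" in the automaton $\A$ of the Annexe, which is a finite verification.

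\medskip
\textbf{Main obstacle.} The conceptual steps (1) and the boundedness half of (2) are short once Lemma \ref{maple} is in hand. The real work — and the step I expect to be the main obstacle — is the sharp containment $S(\varepsilon,\varepsilon')\subset S$ with the \emph{precise} $22$-element set: one has to enumerate the reachable states of the transition rule $t=s/\alpha+(a-b)\alpha^3$ starting from $0$, prove the list closes up (no new states appear), and rule out every near-miss via Lemma \ref{lem-lex}. This is essentially the finite-automaton computation underlying the Annexe, and making it rigorous rather than "checked by Maple" is the delicate part; the reachability argument for direction (3) is of the same computational flavour but easier, since one only needs existence of realising pairs, not exhaustiveness.
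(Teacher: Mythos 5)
Your parts (1) and (3) are fine in outline (the paper does (3) by explicit identities such as $-\alpha^3=\sum_{i\ge1}(\alpha^{4i}+\alpha^{4i+1}+\alpha^{4i+2})$ and their images under multiplication by $\alpha$), but part (2) has a genuine gap at its very first step. You claim that once Lemma \ref{maple} bounds the $\beta_2$- and $\beta_3$-coordinates of $A_k$, finiteness of $S(\varepsilon,\varepsilon')$ follows because the bounded region "contains only finitely many such points". This is unjustified: since the constant term of $P$ is $-1$, $\alpha$ is a unit, so every $A_k$ lies in the image of $\ZZ[\beta]$ under $z\mapsto(\sigma_2(z),\sigma_3(z))$, and that image is dense in $\RR\times\CC$, not discrete; bounding the two contracting coordinates does not bound the number of candidates, because nothing controls the expanding ($\beta$-)coordinate, and the sums $\sum_{i=s}^{3}\delta_i\alpha^i$ with $\delta_i\in\{-1,0,1\}$ have $s$ a priori arbitrarily negative. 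The paper supplies exactly this missing control algebraically: it writes $A_k=n\alpha^3+p\alpha^2+q\alpha+r$ with integer coefficients, invokes Lemma \ref{fina} (the finiteness property of $\beta$, which in your plan appears only in part (3)) to re-expand $\pm A_k$ as a finite greedy expansion $\sum_{i=s}^{m}c_i\beta^i$, and uses Lemma \ref{lem-lex} with \eqref{eq-ali} to force $m\le 3$; only after that is $A_k=\sum_{i=s}^{3}c_i\alpha^i$ with admissible digits, and the problem reduces to a finite case analysis on the bottom exponent $s$. Without this step your argument never produces a finite candidate list at all.

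The second half of your part (2), the sharp containment in $S$, is also thinner than it needs to be. The exclusion of $\alpha^2-\alpha^3$ in Subsection \ref{def-auto} works by showing that value is not in the \emph{already defined} set $S$; it is part of the construction of the automaton's edges and cannot serve as an exclusion mechanism while $S(\varepsilon,\varepsilon')\subset S$ is still being proved. Moreover a one-step test against the Lemma \ref{maple} bounds does not close the list: the states $\alpha^{-1}$, $\alpha^{-1}+1$, $\alpha^{-1}+\alpha^2$, $\alpha^{-1}+1+\alpha$ and $\alpha^{-1}+\alpha+\alpha^2$ all satisfy those bounds, and the paper must iterate \eqref{rel-Ak} two or three steps (the tables in its proof) to kill every continuation of them; for $s\le-2$ it further needs the auxiliary pair $(U,V)$ built from the digits $c_s,\dots,c_3$ and the tail of $\varepsilon$, together with the forced $2$-periodicity $A_n(U,V)=-A_{n+2}(U,V)$, to confine the deep states to the six exceptional elements. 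You honestly flag this as the main obstacle, but as written the proposal contains neither the mechanism that makes the candidate list finite nor the multi-step lookahead that pins it to $S$, and those two points are the substance of the proposition.
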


\begin{proof}
It is easy to establish that if $S(\varepsilon , \varepsilon')$ is finite then $x=y$.
Let us prove the reciprocal.
Let $x=\sum_{i=l}^{\infty}\varepsilon_{i}\alpha^{i}=\sum_{i=l}^{\infty}\varepsilon_{i}'\alpha^{i}=y$ with $\varepsilon = (\varepsilon_{i})_{i \geq l}$ and
$\varepsilon'=(\varepsilon'_{i})_{i \geq l}$ belonging to $\D^\infty$.
Let us prove that $A_k = A_k (\varepsilon , \varepsilon' )$ belongs to $S$ for all $k\geq l$.
As $x=y$, for all $k\geq l$, we have

\begin{align}
 \label{Ak}
A_k
=
 \sum_{i=k+1}^{\infty}(\varepsilon'_{i}-\varepsilon_{i})\alpha^{i-k+3}
 =
 \sum_{i=4}^{\infty}(\varepsilon'_{i+k-3}-\varepsilon_{i+k-3})\alpha^{i} .
\end{align}

Let us fix $k\geq l$ and assume $A_k \not = 0$.
From \eqref{def-Ak}, we deduce there exist $n, p,q, r \in \mathbb{Z}$ such that

\begin{align}
\label{Ak-alg}
A_k= n \alpha^{3}+ p \alpha^2+ q \alpha + r .
\end{align}

But $n \beta^{3}+ p \beta^{2}+ q \beta + r $ or $-(n \beta^{3}+ p
\beta^{2}+ q \beta + r) $  belongs to $\mathbb{Z}[\beta] \cap
\mathbb{R}^{+}$, which is contained in ${\rm Fin}(\beta)$
(see Lemma \ref{fina}).
We deduce there exists $(c_{i})_{s \leq i \leq m} \in
{ \mathcal D }$ such that $c_m=1$ and

\begin{align}
 \label{Ak-alg2}
n \beta^{3}+ p \beta^{2}+ q \beta + r = \pm\sum_{i=s}^{m}
c_{i}\beta^{i} .
\end{align}

We suppose it is equal to $\sum_{i=s}^{m}c_{i}\beta^{i}$.
The other case can be treated in the same way.
As $\beta $, $\beta_2$ and $\beta_3$ are algebraically conjugate, from \eqref{def-Ak}, \eqref{Ak-alg} and \eqref{Ak-alg2} we have

\begin{align}
\label{eq-ali}
\beta^{-k+3}\sum_{i=l}^{k} \varepsilon_{i} \beta^{i} & =
\beta^{-k+3}\sum_{i=l}^{k} \varepsilon'_{i}\beta^{i} +
\sum_{i=s}^{m}c_{i}\beta^{i}.
\end{align}

From Lemma \ref{lem-lex},
$\beta^{-k+3}\sum_{i=l}^{k}\varepsilon_{i} \beta^{i} < \beta^4$,
consequently $m \leq 3$.
Setting $c_i = 0$ for $i>m$, we have

\begin{align}
\label{Akc}
A_k  =  \sum_{i=s}^{3} c_{i}\alpha^{i} .
\end{align}

Remark that if $s\geq 0$ then $A_k$ belongs to $S$.
Hence we suppose $s\leq -1$.

Suppose $s=-1$ and $c_{-1} = 1$.
Let us show that $A_k $ is equal to $ \alpha^{-1}+1 + \alpha^2 $ and consequently belongs to $S$.
In order to do so, we show that the other cases are not possible.
Using Lemma \ref{maple} and \eqref{Ak}, the first entry of $(A_k)$, $(A_k)_1$,
should satisfy $|(A_k)_1| \leq \beta_2^4(1+\beta_2)^{-1}$
which is less than $a=1.6004$.
This excludes the following points~: $\alpha^{-1} + \alpha + \alpha^2 + \alpha^3 $, $ \alpha^{-1} + \alpha + \alpha^3 $, $\alpha^{-1} + \alpha$ and $\alpha^{-1} + \alpha^3$ because the absolute value of their first entries is greater than the value below it in the following array :

$$
\begin{array}{|c|c|c|c|}
\hline
\beta_2^{-1} + \beta_2 + \beta_2^2 + \beta_2^3  & \beta_2^{-1} + \beta_2 + \beta_2^3 & \beta_2^{-1} + \beta_2 & \beta_2^{-1} + \beta_2^3 \\
\hline
1.9 & 2.5 & 2.0 & 1.7\\
\hline
\end{array}
$$

In the same way we should have $|(A_k)_2| \leq C|\beta_3|^4(1-|\beta_3|^6)^{-1}$
which is less than $b=1,8120$.
This excludes the following points : $\alpha^{-1} + 1+ \alpha^3 $, $ \alpha^{-1} + \alpha^2 + \alpha^3 $ and $ \alpha^{-1} + 1 + \alpha^2 + \alpha^3 $,
because the absolute value of their second entries is greater than the value below it in the following array :

$$
\begin{array}{|c|c|c|}
\hline
\beta_3^{-1} + 1+  \beta_3^3  & \beta_3^{-1} + \beta_3^2 + \beta_3^3& \beta_3^{-1} + 1+\beta_3^2 + \beta_3^3 \\
\hline
2.0 & 1.9 & 1.9 \\
\hline
\end{array}
$$

In order to exclude the other cases, except $\frac{1}{\alpha} +1+\alpha^2$, we used \eqref{rel-Ak} to compute
$A_{k+i}$, $i\geq 1$.
Let us explain the strategy.
Suppose neither $(A_k)_1$ nor $(A_k)_2$ is greater than respectively $a$ and $b$.
Then, we compute $A_{k+1}$ using  \eqref{rel-Ak}.
We have three possible values : $\frac{A_k}{\alpha}$, $\frac{A_k}{\alpha} +\alpha^3$ and $\frac{A_k}{\alpha}-\alpha^3$.
To check that $A_k$ does not belong to $S$, it suffices to show that for all these values,
either the first entry or the second is respectively greater than $a$ or $b$.
If it is not the case, for each value that does not satisfy this (both entries are less than, respectively, $a$ and $b$)
we apply again this strategy.
Applying this just once we show that $\frac{1}{\alpha} +1+ \alpha +\alpha^3$ does not belong to $S$.
The values of the relevant entries are in the following array and should be read in the following way :
The value ($1.9$ for example) below a relevant entry of $A_{k+1}$ (resp. $\frac{1}{\beta_2^{2}} +\frac{1}{\beta_2}+ 1 +\beta_2^2 $)
is greater than the absolute value of the relevant entry : $|\frac{1}{\beta_2^{2}} +\frac{1}{\beta_2}+ 1 +\beta_2^2 |>1.9$.

$$
\begin{array}{|c|c|c|c|}
\hline
A_k & \frac{1}{\alpha} +1+ \alpha +\alpha^3  & & \\
\hline
A_{k+1} & \frac{1}{\beta_2^{2}} +\frac{1}{\beta_2}+ 1 +\beta_2^2
        & \frac{1}{\beta_3^{2}} +\frac{1}{\beta_3}+ 1 +\beta_3^2 +\beta_3^3
        & \frac{1}{\beta_2^{2}} +\frac{1}{\beta_2}+ 1 +\beta_2^2 -\beta_2^3 \\
\hline
&1.9 & 1.9 & 2.4 \\
\hline
\end{array}
$$

For the following case, $\frac{1}{\alpha} +1$, we need to apply the strategy twice because
for $A_{k+1} = \frac{1}{\beta_3^{2}} +\frac{1}{\beta_3} -\beta_3^3 $ both entries are respectively less than
$a$ and $b$.

$$
\begin{array}{|l|l|l|l|}
\hline
A_k & \frac{1}{\alpha} +1  &   &\\
\hline
A_{k+1} & \frac{1}{\beta_3^{2}} +\frac{1}{\beta_3}
        & \frac{1}{\beta_3^{2}} +\frac{1}{\beta_3} +\beta_3^3
        & \\
\hline
& 1.83 & 2.0  & \\
 \hline
 A_{k+2} & \frac{1}{\beta_3^{3}} +\frac{1}{\beta_3^{2}}-\beta_3^2
         & \frac{1}{\beta_3^{3}} +\frac{1}{\beta_3^{2}}-\beta_3^2 +\beta_3^3
         & \frac{1}{\beta_3^{3}} +\frac{1}{\beta_3^{2}}-\beta_3^2 -\beta_3^3\\
\hline
 & 2.1 & 1.63 & 2.7 \\
\hline
\end{array}
$$

For the case $A_k = \frac{1}{\alpha} +1 +\alpha $ we need two steps because at the first one both
$\frac{1}{\beta_2^{2}} +\frac{1}{\beta_2}+1 $ and $\frac{1}{\beta_2^{2}} +\frac{1}{\beta_2}+1 +\beta_3^3$
have entries less than, respectively, $a$ and $b$.

$$
\begin{array}{|l|l|l|l|}
\hline
A_k & \frac{1}{\alpha} +1 +\alpha &   &\\
\hline
A_{k+1} & & & \frac{1}{\beta_2^{2}} +\frac{1}{\beta_2}+1 -\beta_2^3  \\
\hline
& & & 1.84 \\
 \hline
 A_{k+2} & \frac{1}{\beta_2^{3}} +\frac{1}{\beta_2^{2}}+\frac{1}{\beta_2}
         & \frac{1}{\beta_2^{3}} +\frac{1}{\beta_2^{2}}+\frac{1}{\beta_2}+ \beta_2^3
         & \frac{1}{\beta_3^{3}} +\frac{1}{\beta_3^{2}}+\frac{1}{\beta_3}- \beta_3^3 \\
\hline
 & 1.77& 2.23 & 1.818  \\
\hline
 & \frac{1}{\beta_3^{3}} +\frac{1}{\beta_3^{2}}+\frac{1}{\beta_3}+\beta_3^2
 & \frac{1}{\beta_2^{3}} +\frac{1}{\beta_2^{2}}+\frac{1}{\beta_2}+\beta_2^2+\beta_2^3
 & \frac{1}{\beta_3^{3}} +\frac{1}{\beta_3^{2}}+\frac{1}{\beta_3}+\beta_3^2- \beta_3^3  \\
\hline
& 1.86& 1.63 & 2.24  \\
\hline
\end{array}
$$

For the three following cases, $\frac{1}{\alpha} +\alpha^2$, $\frac{1}{\alpha}$ and $\frac{1}{\alpha} +\alpha + \alpha^2$,  we need three steps.

$$
\begin{array}{|l|l|l|l|}
\hline
A_k & \frac{1}{\alpha} +\alpha^2 &   &\\
\hline
A_{k+1} & \frac{1}{\beta_3^{2}} +\beta_3  &
        & \frac{1}{\beta_3^{2}} +\beta_3 - \beta_3^3  \\
\hline
& 1.89 & & 2.34   \\
 \hline
 A_{k+2} & \frac{1}{\beta_3^{3}} +1 + \beta_3^2 &
         & \frac{1}{\beta_3^{3}} +1 + \beta_3^2 - \beta_3^3    \\
\hline
 & 1.83 & & 2.26   \\
\hline
A_{k+3} & \frac{1}{\beta_3^{4}} +\frac{1}{\beta_3} + \beta_3 + \beta_3^2
        & \frac{1}{\beta_3^{4}} +\frac{1}{\beta_3} + \beta_3 + \beta_3^2+\beta_3^3
        & \frac{1}{\beta_2^{4}} +\frac{1}{\beta_2} + \beta_2 + \beta_2^2-\beta_2^3 \\
\hline
& 1.818& 2.32 & 1.77  \\
\hline
\end{array}
$$

$$
\begin{array}{|l|l|l|l|}
\hline
A_k & \frac{1}{\alpha} &   &\\
\hline
A_{k+1} & \frac{1}{\beta_2^{2}}   &
        & \frac{1}{\beta_2^{2}} - \beta_2^3  \\
\hline
& 1.66 & & 2.13   \\
 \hline
 A_{k+2} &
         & \frac{1}{\beta_2^{3}} + \beta_2^2+\beta_2^3
         & \frac{1}{\beta_3^{3}} + \beta_3^2-\beta_3^3    \\
\hline
& & 2.01 & 2.17   \\
\hline
A_{k+3} & \frac{1}{\beta_3^{4}} + \beta_3
        & \frac{1}{\beta_3^{4}} + \beta_3 +\beta_3^3
        & \frac{1}{\beta_3^{4}} + \beta_3-\beta_3^3 \\
\hline
& 2.00 & 2.21 & 1.92  \\
\hline
\end{array}
$$

$$
\begin{array}{|l|l|l|l|}
\hline
A_k & \frac{1}{\alpha} +\alpha + \alpha^2 &   &\\
\hline
A_{k+1} & \frac{1}{\beta_2^{2}} + 1 + \beta_2 &
        & \frac{1}{\beta_2^{2}} + 1 + \beta_2 -\beta_2^3  \\
\hline
& 1.89 & & 2.35   \\
 \hline
 A_{k+2} & \frac{1}{\beta_2^{3}} + \frac{1}{\beta_2} + 1 +\beta_2^2
         & \frac{1}{\beta_2^{3}} + \frac{1}{\beta_2} + 1 +\beta_2^2+\beta_2^3  &  \\
\hline
 & 1.84 & 2.30 &  \\
\hline
A_{k+3} & \frac{1}{\beta_2^{4}} +\frac{1}{\beta_2^2} + \frac{1}{\beta_2} + \beta_2
        & \frac{1}{\beta_3^{4}} +\frac{1}{\beta_3^{2}} + \frac{1}{\beta_3} + \beta_3
        & \frac{1}{\beta_2^{4}} +\frac{1}{\beta_2^{2}} + \frac{1}{\beta_2} + \beta_2  \\
        &  -\beta_2^2
        & -\beta_3^2 +\beta_3^3
        & -\beta_3^2 -\beta_3^3 \\
\hline
& 1.77& 1.818 & 2.23   \\
\hline
\end{array}
$$

Hence the only possible $A_k$ (with $c_{-1} = 1$) is $\frac{1}{\alpha} +1+\alpha^2$.

Suppose now $s\leq -2$ and $c_{s} = 1$.
It is useful for the sequel to remark that $U=(u_i)_{i\geq s}=(c_s , c_{s+1} , \dots , c_2,c_3,\varepsilon_{k+1} , \varepsilon_{k+2},\varepsilon_{k+3}, \dots )$
belongs to $\Dinfty$.
Indeed, if $c_3 = 0$, it is clear.
If $c_3 = 1$ and $c_2=0$ then by \eqref{eq-ali}, $\varepsilon_k = 1$.
Hence $\varepsilon_{k+1}\varepsilon_{k+2}\varepsilon_{k+3}\not = 111$ and $U$ belongs to $\D^\infty$.
The other cases can be treated in the same way.

Using \eqref{Ak} and \eqref{Akc} we obtain

$$
u=\sum_{i=s}^{3} c_{i}\alpha^{i} + \sum_{i=4}^{\infty}\varepsilon_{i+k-3}\alpha^{i}
=
\sum_{i=4}^{\infty}\varepsilon'_{i+k-3}\alpha^{i} =v.
$$

We set $V = (\varepsilon'_{i+k-3})_{i\geq s}$ where
$\varepsilon'_{i+k-3} = 0$ when $s\leq i\leq 3$.
Then $ (U,V)$ belongs to $\Delta$, $A_{s+4} (U,V) = c_s\alpha^{-1} + c_{s+1} +c_{s+2}\alpha +c_{s+3}\alpha^2 +
c_{s+4}\alpha^3$ and $A_3 (U,V) = \sum_{i=s}^{3} c_{i}\alpha^{i}$.
Doing what we did for $x$ and $y$ to $u$ and $v$ we obtain that $A_{s+4} (U,V) = \alpha^{-1} +1+\alpha^2$.
Let us show that for all $n\geq s+5$, $A_{n} (U,V)$ belongs to

$$
\C=
\left\{
\pm (\alpha^{-2}+\alpha^{-1} + \alpha),
\pm (\alpha^{-3}+\alpha^{-2} +1+ \alpha^3)
\right\} .
$$

This will imply that $A_k$ belongs to $S$ for all $k\geq l$.
We have that $A_{s+5}(U,V)$ belongs to

$$
\left\{ \frac{1}{\alpha^2} + \frac{1}{\alpha} +\alpha ,\frac{1}{\alpha^2} + \frac{1}{\alpha} +\alpha +\alpha^3 , \frac{1}{\alpha^2} + \frac{1}{\alpha} +\alpha -\alpha^3 \right\} .
$$

The third one can be excluded because $\frac{1}{\beta_3^2} + \frac{1}{\beta_3} +\beta_3 -\beta_3^3 \geq 1.85$.
We proceed as before to exclude the second element :

$$
\begin{array}{|l|l|l|l|}
\hline
A_k & \frac{1}{\alpha^2} +\frac{1}{\alpha } + \alpha + \alpha^3 &   &\\
\hline
A_{k+1} & \frac{1}{\beta_3^{3}} + \frac{1}{\beta_3^{2}} +1 + \beta_3^2 &
        & \frac{1}{\beta_3^{3}} + \frac{1}{\beta_3^{2}} +1 + \beta_3^2 - \beta_3^3 \\
\hline
& 2.00 & & 2.55   \\
 \hline
 A_{k+2} & \frac{1}{\beta_3^{4}} + \frac{1}{\beta_3^{3}} +\frac{1}{\beta_3} + \beta_3
         & \frac{1}{\beta_3^{4}} + \frac{1}{\beta_3^{3}} +\frac{1}{\beta_3} + \beta_3 + \beta_3^3
         & \frac{1}{\beta_3^{4}} + \frac{1}{\beta_3^{3}} +\frac{1}{\beta_3} + \beta_3 - \beta_3^3   \\
\hline
         & 2.45 & 2.54 & 2.47  \\
\hline
\end{array}
$$

Consequently, $A_{s+5} (U,V) = \frac{1}{\alpha^2} + \frac{1}{\alpha} +\alpha$.
We deduce $A_{s+6} (U,V) = \frac{1}{\alpha^3} + \frac{1}{\alpha^2} +1 + \alpha^3$ because
$\frac{1}{\beta_3^3} + \frac{1}{\beta_3^2} +1 > 2.03$ and $\frac{1}{\beta_3^3} + \frac{1}{\beta_3^2} +1 - \beta_3^3 > 2.56$.
Once again, $A_{s+7} (U,V) = \frac{1}{\alpha^4} + \frac{1}{\alpha^3} + \frac{1}{\alpha}+ \alpha^2 -\alpha^3$ because
$\frac{1}{\beta_3^4} + \frac{1}{\beta_3^3} +\frac{1}{\beta_3}+ \beta_3^2 > 1.85$ and $\frac{1}{\beta_3^4} + \frac{1}{\beta_3^3} +\frac{1}{\beta_3}+ \beta_3^2 +\beta_3^3 > 2.16$.
But an easy computation leads to $\frac{1}{\alpha^4} + \frac{1}{\alpha^3} + \frac{1}{\alpha}+ \alpha^2 -\alpha^3 = -(\frac{1}{\alpha^2}+\frac{1}{\alpha} +\alpha ) = -A_{s+5} (U,V)$.
Then continuing in the same way we can check $A_n (U,V) = -A_{n+2} (U,V)$ and $A_n\in \C$ for $n \geq s+5$.
As $3\geq s+5$, we obtain that $A_k (\varepsilon , \varepsilon' ) = A_3(U,V)$ belongs to $\C$.
Thus $S(\varepsilon , \varepsilon' ) $ is included in $S$.

To complete the proof we should show that each element of $S$ belongs to $\Gamma = \cup_{(\varepsilon , \varepsilon' )\in \Delta} S(\varepsilon, \varepsilon' )$.

Remark that if $A_k$ belongs to $\Gamma$ then $-A_k$ also belongs to $\Gamma$.
Consequently it is sufficient to consider the cases where
$A_k = \sum_{i=0}^3 c_i \alpha^i$ with $(c_i)_{0\leq i\leq 3} \in \D$ or $A_k =\alpha^{-1}+1 + \alpha^2$,
$\alpha^{-2}+\alpha^{-1} + \alpha$ or
$\alpha^{-3}+\alpha^{-2} +1+ \alpha^3$.

Notice that we have

\begin{align*}
-\alpha^3 & = \sum_{i=1}^{+\infty} (\alpha^{4i} + \alpha^{4i+1}+ \alpha^{4i+2})= 1+
\alpha+ \alpha^2+ \sum_{i=1}^{+\infty} (\alpha^{4i+1} +
\alpha^{4i+2}+ \alpha^{4i+3})\\
 &=\alpha+ \alpha^2+ \alpha^4+ \sum_{i=1}^{+\infty}
(\alpha^{4i+2} + \alpha^{4i+3}+ \alpha^{4i+4}) \\
& =
\alpha^2+\alpha^4+\alpha^5+ \sum_{i=1}^{+\infty} (\alpha^{4i+3} +
\alpha^{4i+4}+ \alpha^{4i+5}).
\end{align*}

Hence, $1+\alpha +\alpha^2$, $\alpha +\alpha^2$ and $\alpha^2$ belong to $\Gamma$.
Multiplying by $\alpha$ we deduce $\alpha +\alpha^2 + \alpha^3$, $\alpha^2 +\alpha^3$ and $\alpha^3$ belong to $\Gamma$.
Now subtracting $-\alpha^2$ we obtain $1+\alpha$ and $\alpha$ belong to $\Gamma$.
We have that $1$ belongs to $\Gamma$
because $\sum_{i=1}^{+\infty} \alpha^{4i} = 1 +\sum_{i=1}^{+\infty}
\alpha^{4i+1}$.
Now, $1+\alpha^2$ belongs to $\Gamma$ because

\begin{align*}
\sum_{i=2}^{\infty} \alpha^{2i}= 1+ \alpha^2+
 \sum_{i=2}^{\infty} \alpha^{2i+1} .
 \end{align*}

Multiplying by $\alpha$ we deduce $\alpha + \alpha^3$ belongs to $\Gamma$.
Because

\begin{align}
\label{eq-5}
\alpha^{-3}+ \alpha^{-2}+ 1+\alpha^{3} +\sum_{i=1}^{\infty} (\alpha^{4i+2}+\alpha^{4i+3})  = \sum_{i=1}^{\infty} (\alpha^{4i}+\alpha^{4i+1}),
\end{align}

we obtain that $\alpha^{-3}+ \alpha^{-2}+ 1+\alpha^{3}$ belongs to $\Gamma$.
Multiplying \eqref{eq-5} by, respectively, $\alpha$ and $\alpha^2$ we obtain, respectively, that
$\alpha^{-2}+\alpha^{-1} +\alpha$ and $\alpha^{-1}+1 +\alpha^2$ belong to $\Gamma$.
From
\begin{align*}
\alpha^{4}+\sum_{i=1}^{\infty}
\alpha^{4i+3}  = 1+\alpha^3 +\sum_{i=1}^{\infty} \alpha^{4i+1}
= 1+\alpha + \alpha^3 + \sum_{i=1}^{\infty} \alpha^{4i+2}
\end{align*}

it is clear $1+\alpha^3$ and $1+\alpha+\alpha^3$ belong to $\Gamma$.

The equality
\begin{align*}
\alpha^{4}+\sum_{i=1}^{\infty}
\alpha^{4i+2}  = 1+\alpha^2+ \alpha^3 + \sum_{i=1}^{\infty} \alpha^{4i+1}
\end{align*}
implies that $1+\alpha^2+\alpha^3$ belongs to $\Gamma$ and achieves the proof.
\end{proof}

\begin{coro}
\label{coro-auto}
Let $\A$ be the automaton defined in Subsection \ref{def-auto}.
Then,
for all $(\varepsilon_i)_{i\geq l}$ and $(\varepsilon'_i)_{i\geq l}$
belonging to $\Dinfty$ the following assertions are equivalent :

\begin{itemize}
\item
$\sum_{i\geq l} \varepsilon_i \alpha^i = \sum_{i\geq l} \varepsilon'_i \alpha^i$ ;
\item
$(\varepsilon_i , \varepsilon'_i)_{i\geq l}$ is an infinite path in $\A$ beginning in the initial
state.
\end{itemize}
\end{coro}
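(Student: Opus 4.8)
The plan is to deduce Corollary~\ref{coro-auto} from Proposition~\ref{egalite} together with the analysis of $\A$ already carried out in Subsection~\ref{def-auto}; there is no new idea here, only a translation between the analytic picture (the set $S(\varepsilon ,\varepsilon')$ of remainders) and the combinatorial picture (the states visited along a path of $\A$). The bridge is the observation that, whenever $(\varepsilon_i ,\varepsilon'_i)_{i\geq l}$ is read in $\A$ starting from $0$, the state reached after the labels $(\varepsilon_l ,\varepsilon'_l),\dots ,(\varepsilon_k ,\varepsilon'_k)$ is exactly $A_k(\varepsilon ,\varepsilon')$: this is immediate from \eqref{def-Ak}, which gives $A_l(\varepsilon ,\varepsilon')=(\varepsilon_l-\varepsilon'_l)\alpha^3$, for the first step, and from \eqref{rel-Ak} for all subsequent steps.

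For the implication from $\sum_{i\geq l}\varepsilon_i\alpha^i=\sum_{i\geq l}\varepsilon'_i\alpha^i$ to the existence of the path, I would start from Proposition~\ref{egalite}, which gives that $S(\varepsilon ,\varepsilon')$ is finite and $S(\varepsilon ,\varepsilon')\subseteq S$, so in particular $A_k(\varepsilon ,\varepsilon')\in S$ for every $k\geq l$. Since $0\in S$ and $A_l(\varepsilon ,\varepsilon')=\frac{0}{\alpha}+(\varepsilon_l-\varepsilon'_l)\alpha^3$, the triple $(0,(\varepsilon_l ,\varepsilon'_l),A_l(\varepsilon ,\varepsilon'))$ satisfies the defining edge relation $t=\frac{s}{\alpha}+(a-b)\alpha^3$ with both endpoints in $S$, hence is an edge of $\A$; and by \eqref{rel-Ak} each triple $(A_k(\varepsilon ,\varepsilon'),(\varepsilon_{k+1},\varepsilon'_{k+1}),A_{k+1}(\varepsilon ,\varepsilon'))$, $k\geq l$, is likewise an edge of $\A$. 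Concatenating these edges, with the associated state sequence $e_l=0$ and $e_{k+1}=A_k(\varepsilon ,\varepsilon')$ for $k\geq l$, exhibits $(\varepsilon_i ,\varepsilon'_i)_{i\geq l}$ as an infinite path of $\A$ beginning in the initial state.

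For the converse implication I would take a path $(\varepsilon_i ,\varepsilon'_i)_{i\geq l}$ of $\A$ beginning in the initial state and a sequence of states $(e_n)_{n\geq l}$ witnessing it, so $e_l=0$ and $e_{k+1}=\frac{e_k}{\alpha}+(\varepsilon_k-\varepsilon'_k)\alpha^3$ for all $k\geq l$. An induction on $k$ — with base case $e_{l+1}=(\varepsilon_l-\varepsilon'_l)\alpha^3=A_l(\varepsilon ,\varepsilon')$ by \eqref{def-Ak} and inductive step \eqref{rel-Ak} — shows $e_{k+1}=A_k(\varepsilon ,\varepsilon')$ for every $k\geq l$. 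As every $e_n$ lies in the finite set $S$, this gives $S(\varepsilon ,\varepsilon')=\{A_k(\varepsilon ,\varepsilon') : k\geq l\}\subseteq S$, which is finite, and Proposition~\ref{egalite} then yields $\sum_{i\geq l}\varepsilon_i\alpha^i=\sum_{i\geq l}\varepsilon'_i\alpha^i$.

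I do not expect a genuine obstacle here: the substantive content — that the orbit of $0$ under \eqref{rel-Ak} remains inside $S$ precisely when the two sums agree — is exactly what Proposition~\ref{egalite} establishes, and the state set $S$ is finite by inspection. The only points to be careful about while writing are the (harmless) index shift between $A_k(\varepsilon ,\varepsilon')$ and the state $e_{k+1}$ reached after $k-l+1$ transitions, and the need to treat the initial transition out of $0$ as a separate, trivial base case, namely the identity $A_l(\varepsilon ,\varepsilon')=(\varepsilon_l-\varepsilon'_l)\alpha^3$ used above.
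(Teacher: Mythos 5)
Your proposal is correct and follows the paper's own route: the paper's Subsection \ref{def-auto} already sets up the identification of the states visited along a path with the quantities $A_k(\varepsilon,\varepsilon')$ via \eqref{def-Ak} and \eqref{rel-Ak}, and the corollary is then exactly the translation of Proposition \ref{egalite} (both the easy direction ``$S(\varepsilon,\varepsilon')$ finite $\Rightarrow x=y$'' and the containment $S(\varepsilon,\varepsilon')\subset S$) that you carry out. Your explicit induction identifying $e_{k+1}=A_k(\varepsilon,\varepsilon')$ and the check of the initial edge out of $0$ are just careful write-ups of what the paper leaves implicit.
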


\subsection{Neighbors of $\E$}

Here we prove that the set $\E$ has $18$ neighbors where $6$ of them have an intersection with $\E$ reduced to a singleton, and that the boundary can be generated by just $2$ subregions.

\begin{lemma}
\label{lemme-tec1}
Let $(\varepsilon_i)_{i \geq 4}$ and $(\varepsilon'_i)_{i \geq l}$ be two elements of $\Dinfty$
such that $\sum_{i=4}^{\infty}\varepsilon_{i}\alpha^{i} =\sum_{i=l}^{\infty}\varepsilon'_{i}\alpha^{i}$,
where $ l < 4$ and $\varepsilon'_{l}=1$, then $\varepsilon'_{l}\alpha^{l}+ \varepsilon'_{l+1}\alpha^{l+1}
\cdots + \varepsilon'_{3}\alpha^{3}$ belongs to $ S$.
In particular $l \geq -3$ and

\begin{align*}
\varepsilon'_{l}\alpha^{l}+
\cdots + \varepsilon'_{3}\alpha^{3} & = \alpha^{-3}+ \alpha^{-2}+ 1
 + \alpha^{3} & \mbox { if } l=-3,\\
\varepsilon'_{l}\alpha^{l}+
\cdots + \varepsilon'_{3}\alpha^{3}& = \alpha^{-2}+ \alpha^{-1}+ \alpha
 &\mbox { if } l=-2,\\
\varepsilon'_{l}\alpha^{l}+
\cdots + \varepsilon'_{3}\alpha^{3}& = \alpha^{-1}+1 + \alpha^{2}
 &\mbox { if } l=-1.
 \end{align*}
\end{lemma}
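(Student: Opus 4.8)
The plan is to reduce Lemma~\ref{lemme-tec1} to Proposition~\ref{egalite} by exhibiting, from the given equality with differently indexed expansions, a pair in $\Delta$ whose associated sequence of matrices $A_k$ passes through the element $\varepsilon'_l\alpha^l+\cdots+\varepsilon'_3\alpha^3$. Concretely, set $\varepsilon_i = 0$ for $l \le i \le 3$ (so that the sequence $(\varepsilon_i)_{i\ge l}$ is extended to start at $l$; note $\varepsilon_i\varepsilon_{i-1}\varepsilon_{i-2}\varepsilon_{i-3}\ne 1111$ is preserved because we only prepended zeros) and observe that $\left((\varepsilon_i)_{i\ge l},(\varepsilon'_i)_{i\ge l}\right)$ belongs to $\Delta$ since $\sum_{i\ge l}\varepsilon_i\alpha^i = \sum_{i=4}^\infty\varepsilon_i\alpha^i = \sum_{i=l}^\infty\varepsilon'_i\alpha^i = \sum_{i\ge l}\varepsilon'_i\alpha^i$. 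Then Proposition~\ref{egalite} tells us $S(\varepsilon,\varepsilon')\subset S$, so in particular $A_3(\varepsilon,\varepsilon')\in S$.

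Next I would compute $A_3(\varepsilon,\varepsilon')$ explicitly from \eqref{def-Ak}: it equals $\alpha^{-3+3}\sum_{i=l}^3(\varepsilon_i-\varepsilon'_i)\alpha^i = \sum_{i=l}^3(\varepsilon_i-\varepsilon'_i)\alpha^i = -\sum_{i=l}^3\varepsilon'_i\alpha^i$ since $\varepsilon_i=0$ for $l\le i\le 3$. Thus $\sum_{i=l}^3\varepsilon'_i\alpha^i = -A_3(\varepsilon,\varepsilon')\in S$ (using that $S$ is symmetric under negation). This already gives the first assertion of the lemma. To pin down which element of $S$ it is, I use that $\varepsilon'_l = 1$ and $l \le -1$: the coefficient of $\alpha^l$ in $\sum_{i=l}^3\varepsilon'_i\alpha^i$ is $1$, and among the elements of $S$ the only ones with a genuine negative-power term $\alpha^l$ (coefficient $1$, for $l\in\{-1,-2,-3\}$) are exactly $\pm(\alpha^{-1}+1+\alpha^2)$, $\pm(\alpha^{-2}+\alpha^{-1}+\alpha)$ and $\pm(\alpha^{-3}+\alpha^{-2}+1+\alpha^3)$; a sum of the form $\sum_{i=l}^3\varepsilon'_i\alpha^i$ with $\varepsilon'_i\in\{0,1\}$ and leading digit $\varepsilon'_l=1$ is a nonnegative combination, hence must equal the $+$ version. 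Matching the lowest exponent $l$ forces $l\ge -3$ and identifies the three listed values according to $l=-1,-2,-3$.

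The main point requiring care — and the step I expect to be the genuine obstacle — is the claim that these listed elements are the \emph{only} members of $S$ whose expansion as a $\{0,1\}$-polynomial in $\alpha$ with lowest degree $<0$ has leading coefficient $1$. For this I would invoke the algebraic-conjugacy argument used throughout Proposition~\ref{egalite} and Subsection~\ref{def-auto}: an identity $\sum_{i=l}^3\varepsilon'_i\alpha^i = \sigma$ with $\sigma$ one of the finitely many elements of $S$ forces the same identity with $\beta$ in place of $\alpha$, and Lemma~\ref{lem-lex} then rules out all but the correct match, since distinct $\beta$-representations give distinct values; one checks directly that $\beta^{-1}+1+\beta^2$, $\beta^{-2}+\beta^{-1}+\beta$ and $\beta^{-3}+\beta^{-2}+1+\beta^3$ are the only elements of the $\beta$-version of $S$ lying in $\beta^{-1}\ZZ[\beta]$, $\beta^{-2}\ZZ[\beta]$, $\beta^{-3}\ZZ[\beta]$ respectively with the stated leading term, so each forces the displayed value. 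The remaining verifications — that extending $(\varepsilon_i)$ by zeros keeps it in $\Dinfty$, and that the arithmetic $A_3(\varepsilon,\varepsilon') = -\sum_{i=l}^3\varepsilon'_i\alpha^i$ is correct — are routine and follow immediately from the definitions.
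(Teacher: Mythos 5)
Your proposal is correct and follows essentially the same route as the paper: pad $(\varepsilon_i)$ with zeros down to index $l$, apply Proposition~\ref{egalite} to conclude that $A_3(\varepsilon,\varepsilon')=-\sum_{i=l}^{3}\varepsilon'_i\alpha^i$ lies in $S$, and then read off the three exceptional elements of $S$ to force $l\geq -3$ and the listed values. The paper leaves the final identification implicit ("for $i=l$ we obtain the result"), whereas you justify it via the conjugacy transfer to $\beta$ and Lemma~\ref{lem-lex}, which is exactly the argument the paper uses elsewhere (Subsection~\ref{def-auto}), so this is a fuller write-up of the same proof rather than a different one.
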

\begin{proof}
Let $(\varepsilon_i)_{i \geq 4}$ and $(\varepsilon'_i)_{i \geq l}$ be two elements of $\Dinfty$
such that $\sum_{i=4}^{\infty}\varepsilon_{i}\alpha^{i} =\sum_{i=l}^{\infty}\varepsilon'_{i}\alpha^{i}$
where $ l < 4$ and $\varepsilon'_{l}=1$.
From Proposition \ref{egalite}, for all $l \leq i \leq 3,$
$\varepsilon'_{l}\alpha^{i}+\varepsilon'_{l+1}\alpha^{i+1}
\cdots + \varepsilon'_{l-i+3}\alpha^{3}$ belongs to $S$.
In particular, for $i=l$, we obtain the result.
\end{proof}

\begin{lemma}
\label{lemme-tec2}
Let $u\in S$.
Then, there exists $(\varepsilon_i)_{i \geq 4}$ and $(\varepsilon'_i)_{i \geq 4}$ belonging to $\Dinfty$
such that $\sum_{i=4}^{\infty}\varepsilon_{i}\alpha^{i} =u + \sum_{i=4}^{\infty}\varepsilon'_{i}\alpha^{i}$.
\end{lemma}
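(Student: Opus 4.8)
The plan is to prove the converse direction of the picture established so far: while Lemma~\ref{lemme-tec1} shows that every neighbour relation forces the ``overflow'' digit block to lie in $S$, Lemma~\ref{lemme-tec2} must show that \emph{every} element of $S$ is genuinely realized as a difference of two points of $\E$. Fix $u\in S$. First I would invoke the second displayed equation of Proposition~\ref{egalite}, namely $S=\bigcup_{(\varepsilon,\varepsilon')\in\Delta}S(\varepsilon,\varepsilon')$, so there exist $(\delta_i)_{i\geq l}$ and $(\delta'_i)_{i\geq l}$ in $\Dinfty$ with $\sum_{i\geq l}\delta_i\alpha^i=\sum_{i\geq l}\delta'_i\alpha^i$ and an index $k\geq l$ with $u=A_k(\delta,\delta')=\alpha^{-k+3}\sum_{i=l}^k(\delta_i-\delta'_i)\alpha^i$. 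Using \eqref{Ak}, this also equals $\sum_{i=4}^{\infty}(\delta'_{i+k-3}-\delta_{i+k-3})\alpha^i$, so after the shift $j\mapsto i+k-3$ and after separating the positive and negative contributions of the two digit sequences we get an identity $\sum_{i=4}^{\infty}\delta'_{i+k-3}\alpha^i - \sum_{i=4}^{\infty}\delta_{i+k-3}\alpha^i = u$ where both $(\delta_{i+k-3})_{i\geq 4}$ and $(\delta'_{i+k-3})_{i\geq 4}$ are still admissible, i.e.\ lie in $\Dinfty$ (tails of admissible sequences are admissible). Renaming these tails $\varepsilon'$ and $\varepsilon$ respectively gives exactly $\sum_{i=4}^{\infty}\varepsilon_i\alpha^i=u+\sum_{i=4}^{\infty}\varepsilon'_i\alpha^i$, which is the claim.

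The one subtlety is that the construction above starts the two expansions at index $4$ as required, but only after a shift; one must check the shift does not break admissibility, and this is immediate since the forbidden pattern $1111$ is shift-invariant, so any factor of an element of $\Dinfty$ starting at any position is again in $\Dinfty$. A second point worth making explicit is that the equality $\sum_{i=l}^{\infty}\delta_i\alpha^i=\sum_{i=l}^{\infty}\delta'_i\alpha^i$, multiplied by the unit $\alpha^{-k+3}$ and with the common prefix $\alpha^{-k+3}\sum_{i=l}^{k}\min(\delta_i,\delta'_i)\alpha^i$ removed from both sides, yields precisely the two one-sided sums over $i\geq 4$ appearing in \eqref{Ak}; convergence is not an issue because the coordinates of $\alpha$ have modulus $<1$, so all the series converge absolutely (and Lemma~\ref{maple} even gives uniform bounds).

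Alternatively, and perhaps more transparently for the reader, one can avoid Proposition~\ref{egalite}'s surjectivity statement and instead use the explicit witnesses written out at the end of the proof of Proposition~\ref{egalite}: there, for each generator of $S$ (the elements $1+\alpha+\alpha^2$, $\alpha^2$, $1$, $1+\alpha^2$, $\alpha^{-3}+\alpha^{-2}+1+\alpha^3$, etc.), an explicit pair of admissible sequences realizing it as a difference $\sum(\varepsilon'_i-\varepsilon_i)\alpha^i$ is produced using the identity $-\alpha^3=\sum_{i\geq1}(\alpha^{4i}+\alpha^{4i+1}+\alpha^{4i+2})$ and its shifts. Since the passage from a relation $\sum\delta_i\alpha^i=\sum\delta'_i\alpha^i$ to a statement of the form $\sum\varepsilon_i\alpha^i=u+\sum\varepsilon'_i\alpha^i$ is the routine separation-of-signs and shift argument just described, reusing those witnesses gives Lemma~\ref{lemme-tec2} directly for every $u\in S$. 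I expect the main (very minor) obstacle to be purely bookkeeping: keeping the starting index pinned at $4$ while performing the shift and sign separation, and confirming admissibility is preserved throughout; there is no real analytic or combinatorial difficulty beyond what Proposition~\ref{egalite} already supplies.
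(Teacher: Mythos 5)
Your argument is correct and is essentially the paper's own proof: the paper's one-line justification cites exactly Proposition \ref{egalite} (the statement $S=\bigcup_{(\varepsilon ,\varepsilon')\in \Delta } S (\varepsilon ,\varepsilon')$) together with the identity \eqref{Ak}, which is precisely your shift-and-rename argument producing $\sum_{i\geq 4}\varepsilon_i\alpha^i = u + \sum_{i\geq 4}\varepsilon'_i\alpha^i$ from $u=A_k$. The only point the paper leaves implicit is the one you check explicitly, namely that the shifted tails remain in $\Dinfty$ because the forbidden block $1111$ is shift-invariant.
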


\begin{proof}
This comes from Proposition \ref{egalite} and the identity \eqref{Ak}.
\end{proof}

In our context, Lemma 2 in \cite{M05} can be formulated in the following way.

\begin{lemma}
\label{lemme-ali}
Let $x \in \RR \times \CC$, then $x$ belongs to the boundary of $\E$
if and only if
there exists $l \leq 3$ such that $x= \sum_{i=4}^{+\infty} \epsilon_i \alpha^{i}=
 \sum_{i=l}^{+\infty} \epsilon'_i \alpha^{i}$, where $( \epsilon_i)_{i \geq 4}$ and
$(\epsilon'_i)_{i \geq l}$ belong to $\D^{\infty}$, and, $\epsilon'_l \ne 0$.
\end{lemma}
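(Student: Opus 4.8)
The final statement to prove is Lemma~\ref{lemme-ali}, which characterizes the boundary of $\E$ as those points admitting an expansion $\sum_{i\geq 4}\epsilon_i\alpha^i$ together with a \emph{second} expansion $\sum_{i\geq l}\epsilon'_i\alpha^i$ whose lowest index $l$ satisfies $l\leq 3$ and $\epsilon'_l\neq 0$. My plan is to reproduce (in the present setting) the argument of Lemma~2 of \cite{M05}, combining the topological description of the boundary coming from the tiling (Proposition~\ref{pavage}) with the arithmetic characterization of points with two $\alpha$-expansions (Proposition~\ref{egalite} / Corollary~\ref{coro-auto}).

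\emph{Forward direction.} Suppose $x$ belongs to the boundary $\partial\E$. Since $\E$ tiles $\RR\times\CC$ periodically with group $G=\ZZ\alpha^0+\ZZ\alpha+\ZZ\alpha^2$ and the pieces overlap in measure zero, a boundary point of $\E$ lies in $(\E+p)\cap(\E+q)$ for two distinct $p,q\in G$; translating, $x\in\E$ and $x\in\E+r$ for some nonzero $r=n_0\alpha^0+n_1\alpha+n_2\alpha^2\in G$. Thus $x=\sum_{i\geq 4}\epsilon_i\alpha^i$ with $(\epsilon_i)_{i\geq 4}\in\D^\infty$, and also $x-r=\sum_{i\geq 4}\delta_i\alpha^i$ with $(\delta_i)_{i\geq 4}\in\D^\infty$, so that $\sum_{i\geq 4}\epsilon_i\alpha^i = r+\sum_{i\geq 4}\delta_i\alpha^i$. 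Now I push $r$ below index $4$: since $r\in\ZZ[\alpha]$ and, via the conjugate $\beta$, $r$ (or $-r$) corresponds to an element of $\ZZ[\beta]\cap\RR^+\subset\mathrm{Fin}(\beta)$ by Lemma~\ref{fina}, one writes $r=\sum_{i=s}^{3}c_i\alpha^i$ for some $c_i\in\{0,1\}$ with $(c_i)\in\D$ and $s\leq 3$ (the upper bound $3$ coming from a lexicographic/size estimate as in Lemma~\ref{lem-lex}, since $|r|$ is small after conjugation, and $r\neq 0$ forces $c_s\neq 0$ for the least such $s$). Absorbing $r$ and the tail $\sum_{i\geq 4}\delta_i\alpha^i$ into a single sequence $\epsilon'$ indexed from $l:=\min(s,4)$ downward gives $x=\sum_{i=l}^{\infty}\epsilon'_i\alpha^i$ with $(\epsilon'_i)\in\D^\infty$ and $\epsilon'_l\neq 0$. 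If this absorption happens to produce a carry so that all digits below $4$ vanish, then $x$ would have two expansions both supported on $\{i\geq 4\}$; applying Proposition~\ref{egalite} to those two and the fact that $0$ is in the interior of $\E$ shows $x$ cannot then be on the boundary, a contradiction — so genuinely $l\leq 3$.

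\emph{Reverse direction.} Conversely, suppose $x=\sum_{i\geq 4}\epsilon_i\alpha^i=\sum_{i=l}^{\infty}\epsilon'_i\alpha^i$ with $l\leq 3$, $\epsilon'_l\neq 0$, both digit strings in $\D^\infty$. By Lemma~\ref{lemme-tec1}, the partial sum $u:=\epsilon'_l\alpha^l+\cdots+\epsilon'_3\alpha^3$ lies in $S$ and is one of the three explicit nonzero elements (forcing $l\in\{-3,-2,-1\}$, in particular $l<4$ strictly so $u\neq 0$). Write $r:=u-\sum_{i=l}^{3}\epsilon'_i\alpha^i$... more directly: from the two expansions, $\sum_{i\geq 4}\epsilon_i\alpha^i - \sum_{i\geq 4}\epsilon'_i\alpha^i = u \in S$, and by construction $u$ equals a nonzero element of $\ZZ\alpha^0+\ZZ\alpha+\ZZ\alpha^2+\ZZ\alpha^{-1}+\cdots$; using the $\alpha^{-i}$-relations recorded in the proof of Proposition~\ref{egalite} (e.g. $\alpha^{-3}+\alpha^{-2}+1+\alpha^3+\sum_{i\geq1}(\alpha^{4i+2}+\alpha^{4i+3})=\sum_{i\geq1}(\alpha^{4i}+\alpha^{4i+1})$ and its multiples by $\alpha,\alpha^2$), each such $u$ is congruent modulo $G$ to $0$ via a difference of two $\D^\infty$-tails starting at index $4$; equivalently $x$ and $x-p$ both lie in $\E$ for some nonzero $p\in G$. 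Hence $x\in\E\cap(\E-p)$, which has empty interior by Proposition~\ref{pavage}, and since $x\in\E$ this places $x\in\partial\E$.

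\emph{Main obstacle.} The delicate point is the bookkeeping in the forward direction: controlling exactly which indices below $4$ can receive nonzero digits when one rewrites $r\in G$ in base $\alpha$ and merges it with the tail, and ruling out the degenerate case where all sub-$4$ digits cancel (which is where one genuinely needs that $0$ is interior to $\E$, equivalently that two expansions both starting at index $\geq 4$ describe an interior point). The explicit list of admissible $u\in S$ with negative support from Lemma~\ref{lemme-tec1} — only $l=-1,-2,-3$ occur, with one value each — is what makes both the bound $l\geq -3$ and the reverse construction clean; the rest is the lexicographic size estimate (Lemma~\ref{lem-lex}) bounding the top index by $3$, which is routine.
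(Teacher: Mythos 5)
You should first note that the paper itself gives no proof here: the statement is imported as Lemma 2 of \cite{M05}, so a self-contained argument would be a genuinely different contribution --- but yours has real gaps in both directions. In the forward direction, after reaching $x\in\E\cap(\E+r)$ with $r\in G\setminus\{0\}$, the step ``write $r=\sum_{i=s}^{3}c_i\alpha^i$'' fails in three ways. First, Lemma \ref{fina} applies to the $\beta$-conjugate of $r$, which may well be negative; then the digits you obtain are digits of $-r$, and substituting into $x=r+\sum_{i\ge4}\delta_i\alpha^i$ yields a low-index expansion of $x-r$, not of $x$, so the parenthetical ``(or $-r$)'' does not repair the construction. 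Second, the bound that the expansion stops at index $3$ is defended by saying $|r|$ is small after conjugation, which conflates the two embeddings: $r$ is small in the contracting space $\RR\times\CC$ because $r\in\E-\E$, but its $\beta$-conjugate $n_0+n_1\beta+n_2\beta^2$ is a different real number not controlled by $|r|$, and before the neighbour set is known you cannot assert it is $<\beta^4$ (in Proposition \ref{egalite} the analogous bound $m\le3$ comes from the identity \eqref{eq-ali} between $\beta$-expansions of partial sums of admissible words, a configuration you do not have for $r$ alone). Third, even granting $r=\sum_{i=s}^{3}c_i\alpha^i$, concatenating these digits with $(\delta_i)_{i\ge4}$ can create the forbidden factor $1111$ at the junction; the renormalization needed to restore admissibility while keeping a nonzero digit below index $4$ is exactly the delicate point, and it is not carried out. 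Your fallback for the degenerate case --- that two expansions supported on $i\ge4$ together with $0$ being interior would force $x\notin\partial\E$ --- is unproved and false as a blanket claim: the point $z_1=\sum_{i\ge2}(\alpha^{4i}+\alpha^{4i+1})=\alpha^4+\alpha^5+\sum_{i\ge2}(\alpha^{4i+1}+\alpha^{4i+2})$ appearing in part g) of the last proposition has two expansions supported on $i\ge4$ and lies in $\E(\alpha+\alpha^2)$, hence on the boundary.

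The reverse direction has the same kind of gap. You misquote Lemma \ref{lemme-tec1}: it forces $l\ge-3$ and identifies $u=\sum_{i=l}^{3}\epsilon'_i\alpha^i$ with one of the three special elements only when $l\in\{-3,-2,-1\}$; for $l\in\{0,1,2,3\}$ any nonzero admissible $u$ supported on $[0,3]$ can occur, e.g.\ $u=\alpha^3\notin G$. The crucial claim is then that $x\in\E+p$ for some $p\in G\setminus\{0\}$, and your justification --- $u$ is ``congruent modulo $G$ to $0$ via a difference of two $\D^{\infty}$-tails'' --- does not give it: those identities involve some pair of tails, not the specific tail $(\epsilon'_i)_{i\ge4}$ that accompanies $u$ in the expansion of $x$, so they say nothing about the point $x$ itself. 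Converting a low-index expansion of $x$ into membership in a $G$-translate of $\E$ is precisely the digit case analysis (via $\alpha^3=-1-\alpha-\alpha^2+\alpha^4$ and the automaton, to exclude incompatible digit patterns) that the paper performs later in the proof of the $18$-neighbour proposition; it cannot be waved away here, especially since that proposition relies on the present lemma. (The final inference ``$\E\cap(\E+p)$ has measure zero, hence $x\in\partial\E$'' also uses silently that $\E$ is the closure of its interior; the paper is equally brief on this point, so I weight it less.) As it stands both implications rest on unproved steps; a workable repair for the forward direction is a compactness argument (approximate $x$ by points outside $\E$, expand each of them with a nonzero digit below index $4$, bound the starting indices, and extract a pointwise convergent subsequence of digit strings), and for the reverse direction one must actually perform the renormalization/automaton bookkeeping or quote \cite{M05} as the paper does.
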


\begin{prop}
The boundary of $\E$ is the union of the $18$ non empty
regions $\E (u)$, $u\in \{ a,-a ; a\in A\}$, whose pairwise intersections have measure zero, where

$$
\E (u)= \E \cap (\E + u) \hbox{ and }
$$

$$
A=\{ 1,
1+ \alpha,
1+\alpha^2,
1+ \alpha+ \alpha^2 ,
\alpha^{-3}+ \alpha^{-2}+ 1 + \alpha^{3}, \alpha,
\alpha+\alpha^2,
\alpha^2,
\alpha^{-2}+ \alpha^{-1}+ \alpha
\} .
$$
\end{prop}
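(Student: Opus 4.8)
The plan is to combine the characterization of the boundary in Lemma~\ref{lemme-ali} with the explicit description of the set $S$ obtained in Proposition~\ref{egalite}, via the two technical lemmas Lemma~\ref{lemme-tec1} and Lemma~\ref{lemme-tec2}. First I would show the inclusion $\partial \E \subset \bigcup_{u} \E(u)$. By Lemma~\ref{lemme-ali}, a point $x\in\partial\E$ admits two expansions $x=\sum_{i\geq 4}\varepsilon_i\alpha^i=\sum_{i\geq l}\varepsilon'_i\alpha^i$ with $l\leq 3$ and $\varepsilon'_l\neq 0$, hence $\varepsilon'_l=1$ since digits are in $\{0,1\}$. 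Write $p=\sum_{i=l}^{3}\varepsilon'_i\alpha^i$ and $y=\sum_{i\geq 4}\varepsilon'_i\alpha^i$, so $x=y+p$ with $y\in\E$; then $x-p=y\in\E$, i.e. $x\in\E\cap(\E+p)$. By Lemma~\ref{lemme-tec1}, $p$ belongs to $S$, and when $l<0$ it is one of the three special elements $\alpha^{-1}+1+\alpha^2$, $\alpha^{-2}+\alpha^{-1}+\alpha$, $\alpha^{-3}+\alpha^{-2}+1+\alpha^3$; when $0\leq l\leq 3$ it is of the form $\sum_{i=0}^{3}c_i\alpha^i$ with $(c_i)\in\D$ and $c_l=1$ (so $p\neq 0$), which gives one of the elements listed in $A$ (or possibly with a different top digit pattern, but in all cases $p\in S\setminus\{0\}$). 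The point is that $S\setminus\{0\}=A\cup(-A)$, which one reads off directly from the definition of $S$ together with the symmetry $u\in S\Leftrightarrow -u\in S$; so $x\in\E(p)$ for some $p\in A\cup(-A)$.

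Next I would establish the reverse inclusion $\bigcup_u\E(u)\subset\partial\E$ and nonemptiness of each $\E(u)$. For any $u\in S\setminus\{0\}$, Lemma~\ref{lemme-tec2} produces $(\varepsilon_i)_{i\geq 4}$, $(\varepsilon'_i)_{i\geq 4}$ in $\Dinfty$ with $\sum_{i\geq 4}\varepsilon_i\alpha^i=u+\sum_{i\geq 4}\varepsilon'_i\alpha^i$; the left side lies in $\E$ and $\sum_{i\geq 4}\varepsilon'_i\alpha^i\in\E$, so this common point lies in $\E\cap(\E+u)$, proving $\E(u)\neq\emptyset$. To see $\E(u)\subset\partial\E$: a point $x\in\E(u)$ has $x=\sum_{i\geq 4}a_i\alpha^i$ and $x-u=\sum_{i\geq 4}b_i\alpha^i$ with both digit sequences in $\Dinfty$; writing $u=\sum_{i=l}^{3}u_i\alpha^i$ with $u_l=1$ (using that $u\in A$, or $-u\in A$ in which case exchange the roles of $x$ and $x-u$), one gets $x=\sum_{i=l}^{3}u_i\alpha^i+\sum_{i\geq 4}b_i\alpha^i$, a second expansion of $x$ starting at index $l\leq 3$ with leading digit $1\neq 0$; by Lemma~\ref{lemme-ali} this forces $x\in\partial\E$. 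One must check the concatenation $u_l\cdots u_3 b_4 b_5\cdots$ lies in $\Dinfty$, i.e. contains no block $1111$; this is the same admissibility verification already performed inside the proof of Proposition~\ref{egalite} (the argument there that $U\in\Dinfty$), using that $u\in S$ was itself realized by admissible sequences.

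For the measure-zero statement on pairwise intersections, I would invoke Proposition~\ref{pavage}: $\E$ tiles $\RR\times\CC$ periodically with period group $G=\ZZ\alpha^0+\ZZ\alpha+\ZZ\alpha^2$, and $(\E+p)\cap(\E+q)$ has measure zero for $p\neq q$ in $G$. The regions $\E(u)=\E\cap(\E+u)$ for distinct $u$ are contained in distinct translates $\E+u$, all of which meet $\E$; since $\partial\E$ itself has measure zero (it is the boundary of a set of positive measure that tiles, so by the tiling property its boundary is Lebesgue-null), a fortiori each $\E(u)$ has measure zero, hence so do all pairwise intersections $\E(u)\cap\E(u')$. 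Actually the cleaner route, avoiding reliance on which translates lie in $G$, is simply: each $\E(u)\subset\partial\E$, and $\partial\E$ has Lebesgue measure zero because $\E$ has nonempty interior (as $0$ is interior to $\E$) and tiles periodically, so $\mathrm{Leb}(\partial\E)=0$; therefore every pairwise intersection, being a subset of $\partial\E$, is null.

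The main obstacle I anticipate is the bookkeeping in identifying $S\setminus\{0\}$ exactly with $A\cup(-A)$ and, more delicately, verifying in the reverse inclusion that each $u\in A$ really arises as a difference $p=\sum_{i=l}^{3}\varepsilon'_i\alpha^i$ from a genuinely \emph{admissible} pair of $\Dinfty$-sequences (no forbidden $1111$ block when the tail is appended). The elements of $A$ with $l<0$ are exactly the three distinguished states, whose realizability is built into the $\Gamma$-computation at the end of the proof of Proposition~\ref{egalite} (equation~\eqref{eq-5} and its multiples by $\alpha$, $\alpha^2$); the elements with $l\geq 0$ — namely $1$, $1+\alpha$, $1+\alpha^2$, $1+\alpha+\alpha^2$, $\alpha$, $\alpha+\alpha^2$, $\alpha^2$ — are precisely those shown to lie in $\Gamma$ in that same proof. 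So the work reduces to a careful cross-check that the list $A$ is neither too large nor too small relative to $\{u\in S: u=\sum_{i=l}^{3}u_i\alpha^i,\ u_l=1,\ l\leq 3\}\cup\{\text{special states}\}$, i.e. that no other element of $\D$ (such as $1+\alpha^3$, $1+\alpha+\alpha^3$, $1+\alpha^2+\alpha^3$, $\alpha+\alpha^3$, $\alpha^2+\alpha^3$, $\alpha+\alpha^2+\alpha^3$, $\alpha^3$) can occur as a \emph{leading} block $u_l\cdots u_3$ with $u_l=1$ of an admissible second expansion — which is exactly why those extra elements of $S$ do not produce new neighbors and why $A$ has the stated $9$ elements.
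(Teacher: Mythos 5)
Your overall scaffolding (Lemma~\ref{lemme-ali} for the two-expansion characterization, Lemma~\ref{lemme-tec2} plus Proposition~\ref{pavage} for nonemptiness and the measure-zero statements) matches the paper, but the forward inclusion $\partial\E\subset\bigcup_u\E(u)$ — which is the actual content of the proposition — rests on a false identity. You assert that $S\setminus\{0\}=A\cup(-A)$. It is not: $S$ contains all $\pm\sum_{i=0}^3 c_i\alpha^i$ with $(c_i)\in\D$ together with the six exceptional states, i.e.\ $34$ nonzero elements, while $A\cup(-A)$ has only $18$. In particular $\alpha^3$, $1+\alpha^3$, $\alpha+\alpha^3$, $\alpha^2+\alpha^3$, \dots{} and, crucially, the state $\alpha^{-1}+1+\alpha^2$ all lie in $S$ but not in $A\cup(-A)$ (note that only \emph{two} of the three exceptional states appear in $A$, not three as you claim). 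Moreover your fallback explanation — that these extra elements ``cannot occur as a leading block of an admissible second expansion'' — is also not what happens: they \emph{do} occur (e.g.\ a second expansion with $\varepsilon'_3=1$, $\varepsilon'_i=0$ for $0\le i\le 2$ has prefix $\alpha^3$). The real work, which your proposal omits entirely, is to show that whenever the prefix $\sum_{i=l}^3\varepsilon'_i\alpha^i$ is an element of $S$ outside $A\cup(-A)$, the point is nevertheless absorbed into one of the $18$ listed regions or the configuration is impossible. In the paper this occupies most of the proof: for $l=-1$ one uses Corollary~\ref{coro-auto} and the explicit automaton to see that the continuation is forced into the periodic patterns $(0,1)(1,0)(1,0)(0,1)$, and rewriting the resulting series shows $z\in\E(-\alpha-\alpha^2)\cup\E(-1-2\alpha-\alpha^2)$; for $l\ge 0$ with $\varepsilon'_3=1$ one rewrites via $1+\alpha+\alpha^2+\alpha^3=\alpha^4$ in the subcases $\varepsilon'_4=0$, $\varepsilon'_4=1,\varepsilon'_5=0$, $\varepsilon'_4=\varepsilon'_5=1$, again calling on the automaton to force or exclude digit patterns (e.g.\ the subcase $\varepsilon'_0+\varepsilon'_1=1$, $\varepsilon'_3=\varepsilon'_4=\varepsilon'_5=1$ is shown to admit no infinite path). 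None of this is recoverable from the set $S$ alone, so deferring it as ``bookkeeping'' leaves the proposition unproved.

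A secondary, smaller gap: for the reverse inclusion you argue $\E(u)\subset\partial\E$ by concatenating the digits of $u$ with the expansion of $x-u$ and invoking Lemma~\ref{lemme-ali}. For $u=-a$ with $a\in A$ there is no nonnegative digit string for $u$ to concatenate (your ``exchange $x$ and $x-u$'' shows $x+a\in\partial\E$, not $x\in\partial\E$), and even for positive $u$ admissibility can fail at the junction (e.g.\ $u=\alpha^{-3}+\alpha^{-2}+1+\alpha^3$ has digit $1$ at index $3$, so a tail beginning $111$ creates a forbidden block). The paper sidesteps this by checking that every $u\in A$ lies in the lattice $G=\ZZ+\ZZ\alpha+\ZZ\alpha^2$ (via $\alpha^{-3}+\alpha^{-2}+1+\alpha^3=1+2\alpha+\alpha^2$ and $\alpha^{-2}+\alpha^{-1}+\alpha=-1+\alpha^2$) and then deducing both the measure-zero property and the containment in $\partial\E$ from the tiling of Proposition~\ref{pavage}; you should do the same rather than route this direction through Lemma~\ref{lemme-ali}.
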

\begin{proof}
Let $u$ be an element of $A$, then $u$ is a state of the automaton $\A$.
From Lemma \ref{lemme-tec2}, there exist $(\varepsilon_i)_{i \geq 4}$ and $(\varepsilon'_i)_{i \geq 4}$ belonging to $\Dinfty$
such that $\sum_{i=4}^{\infty}\varepsilon_{i}\alpha^{i} =
u + \sum_{i=4}^{\infty}\varepsilon'_{i}\alpha^{i}$.
Thus, from Proposition \ref{pavage}, $\E \cap (\E +u)$ is not empty and with measure zero.
It will be useful to check that

$$
\alpha^{-3}+ \alpha^{-2}+ 1 + \alpha^{3} =1+ 2 \alpha+ \alpha^2 \hbox{ and }
\alpha^{-2}+ \alpha^{-1}+ \alpha = -1+ \alpha^2 .
$$

Consequently, Proposition \ref{pavage} implies $\bigcup_{u \in A} \E( u)\cup \E(-u)$ is contained in the boundary of $\E$.

 Now, let $z$ be an element of the boundary of $\E$, then by Lemma \ref{lemme-ali} there
 exist two elements of $\D^\infty,\; (\varepsilon_i)_{i \geq 4}$ and
 $(\varepsilon'_i)_{i \geq l},\;
  l \in \mathbb{Z},\; l<4$ such that $z= \sum_{i=4}^{+\infty}
 \varepsilon_i \alpha^{i}= \sum_{i=l}^{+\infty}
 \varepsilon'_i \alpha^{i}.$
We can suppose $\varepsilon_l = 1$.
Let us consider the following four cases.

\medskip

Suppose $l =-3$.
From Lemma \ref{lemme-tec1}, we deduce that $z \in \E (\alpha^{-3}+\alpha^{-2} + 1+ \alpha^3  )$.

\medskip

Suppose $l=-2$.
From Lemma \ref{lemme-tec1}, we deduce that $z \in \E (\alpha^{-2}+\alpha^{-1} + \alpha  )$.

\medskip

Suppose $l=-1$.
From Lemma \ref{lemme-tec1}, we deduce that  $z= \sum_{i=4}^{+\infty}
 \varepsilon_i \alpha^{i}= \alpha^{-1} + 1 +  \alpha^2 +\sum_{i=4}^{+\infty}
 \varepsilon'_i \alpha^{i}.$
Corollary \ref{coro-auto} implies that $t =(0,1)(0,1)(0,0)(0,1)(0,0)(\varepsilon_4 , \varepsilon'_4) \dots $ is an infinite
 path of the automaton $\A$ starting at the initial state.
 Using the automaton, we see that $t =(0,1)(0,1),(0,0)(0,1)(0,0)awww\dots$, where $a=(1,1)$ or $(0,0)$ and $w=(0,1)(1,0)(1,0)(0,1)$.
Consequently, $z = \alpha^{-1} + 1 +  \alpha^2 +\alpha^4+\alpha^5 + \sum_{i=2}^{\infty} (\alpha^{4i}+\alpha^{4i+1})$
 or $z = \alpha^{-1} + 1 +  \alpha^2 +\alpha^5 + \sum_{i=2}^{\infty} (\alpha^{4i}+\alpha^{4i+1})$.
Thus,
$z = -\alpha -\alpha^2+\alpha^6 + \sum_{i=2}^{\infty} (\alpha^{4i}+\alpha^{4i+1})$
 or $z = -1 -2\alpha - \alpha^2 + \sum_{i=1}^{\infty} (\alpha^{4i}+\alpha^{4i+1})$, and,
$z \in \E (-\alpha -\alpha^2 ) \cup  \E (-1 -  2\alpha -\alpha^2  ).$

\medskip

Suppose $l \geq0$.
Then $z= \sum_{i=4}^{+\infty}
 \varepsilon_i \alpha^{i}= \varepsilon'_0+ \varepsilon'_1 \alpha+ \varepsilon'_2
 \alpha^2 + \varepsilon'_3 \alpha^3+  \sum_{i=4}^{+\infty}
 \varepsilon'_i \alpha^{i}.$

 If $\varepsilon'_3= 0$, then $z \in \E(u)$ where $u=\varepsilon'_0+ \varepsilon'_1 \alpha+ \varepsilon'_2
 \alpha^2.$

If  $\varepsilon'_3= 1$ and $\varepsilon'_4= 0$,
  then $z= (\varepsilon'_0-1)+ (\varepsilon'_1-1) \alpha+
  (\varepsilon'_2 -1)
 \alpha^2 +  \alpha^4+  \sum_{i=5}^{+\infty}
 \varepsilon'_i \alpha^{i}
   \in \E(u)$ where $u=(\varepsilon'_0-1)+ (\varepsilon'_1-1) \alpha+
  (\varepsilon'_2 -1)
 \alpha^2 .$

 Now suppose $\varepsilon'_3= \varepsilon'_4= 1$ and $\varepsilon'_5= 0$.
 Then $\varepsilon'_1= 0$ or
  $\varepsilon'_2 =0$,
and, $z= \varepsilon'_0+ (\varepsilon'_1-1) \alpha+
  (\varepsilon'_2 -1)\alpha^2+
   \alpha^5+  \sum_{i=6}^{+\infty}
 \varepsilon'_i \alpha^{i}.$
Hence :
 \begin{itemize}
 \item
 If $\varepsilon'_0= 0,$ then $z \in \E(u)$ where $u= (\varepsilon'_1-1) \alpha+
  (\varepsilon'_2 -1)\alpha^2.$
 \item
 If $\varepsilon'_0=1$, then $t=(0,1)
 (0,\varepsilon'_1)(0,\varepsilon'_2)(0,1)(\varepsilon_4 ,1) (
 \varepsilon_5 ,0) \ldots$ is an infinite path in the automaton beginning in the
 initial state. This implies that $t= (0,1)(0,1)(0,0) (0,1)(1,1)(0,0)ww\ldots$
where $w=(0,1)(1,0)(1,0)(0,1).$ Hence
$z=
1+ \alpha+ \alpha^{3}+\alpha^{4}+\alpha^{6} +\sum_{i=2}^{\infty} (\alpha^{4i+1}+\alpha^{4i+2})$.
Thus
$z+ \alpha^{-2}+ \alpha^{-1}+\alpha= \alpha^{5}+\alpha^{6} +\sum_{i=2}^{\infty} (\alpha^{4i+1}+\alpha^{4i+2})$
and $z$ belongs to $\E (-\alpha^{-2}- \alpha^{-1}- \alpha )$.
\end{itemize}

If $\varepsilon'_3= \varepsilon'_4= \varepsilon'_5= 1,$ then
 $\varepsilon'_2=\varepsilon'_6=0.$
 Hence $z= \varepsilon'_0+ \varepsilon'_1 \alpha -\alpha^2+
   \alpha^6+  \sum_{i=7}^{+\infty}
 \varepsilon'_i \alpha^{i}.$

\begin{itemize}
\item
If $\varepsilon'_0= \varepsilon'_1=0$, then $z \in \E( -\alpha^2)$.
\item
When $\varepsilon'_0+ \varepsilon'_1  = 1$, there is no infinite path in
the automaton starting in the initial state and beginning with
$(0, \varepsilon'_0) (0,\varepsilon'_1) (0,0) (0,1) (\varepsilon_4, 1) (\varepsilon_5,1)$.
\item
Hence it remains to consider the case : $\varepsilon'_0+ \varepsilon'_1  = 2$.
But it is easy to check that this implies $\varepsilon'_6 =1$ which is not possible.
\end{itemize}

This ends the proof.
\end {proof}

Using the automaton given in the Annexe, we deduce the
following result.

\begin{prop}
Let $X =\E (1+\alpha + \alpha^2)$ and $Y = \E (1+\alpha )$. Then,

$$
\begin{array}{ll}
a) \ \E (1)= 1+ \alpha X, & b) \ \E ( \alpha^2)= - \frac{1}{\alpha }-1- \alpha + \frac{X}{\alpha}, \\
c) \ \E (1+ \alpha^2)= \left\{\frac{\alpha^4}{1- \alpha^2 }\right\}, & d) \ \E (\alpha^{-2}+ \alpha^{-1}+ \alpha )= \left\{\frac{\alpha^5 + \alpha^6}{1- \alpha^4 }\right\},\\
e)\E (\alpha^{-3}+ \alpha^{-2}+ 1
 + \alpha^{3})= \left\{\frac{\alpha^4 + \alpha^5}{1- \alpha^4 }\right\} &  f) \ \E
(\alpha )=  f_{0}(X) \cup f_{1}(X) \cup f_{1}(Y)
\end{array}
$$

$$
g) \
\E (\alpha+ \alpha^2)=  g_0 (X) \cup g_1 (X) \cup g_1 (Y) \cup g_2
(Y) \cup g_3 (Y), \hbox{ where }
$$

$$
\begin{array}{lll}
f_{0}(z)= \alpha + \alpha^2 z, & f_{1}(z)= \alpha +\alpha^4 +
\alpha^2 z, & g_0 (z) = \alpha^5 +\alpha^4 z,\\
g_1 (z) = \alpha^5 + \alpha^6+\alpha^4 z, & g_2 (z) = \alpha z, &
g_3 (z) = \alpha^4+\alpha z ,
\end{array}
$$

$$
h)\  X = \bigcup_{i=0}^{4}h_i (X)  \cup h_1 (Y) \cup h_3 (Y) \hbox{ and }
i) \ Y= \bigcup_{i=5}^{11}h_i (Y)  \cup \bigcup_{i=12}^{17}h_i (X), \hbox{ where }
$$

$$
\begin{array}{ll}
h_0 ( z )=\alpha^4 + \alpha^4 z, & h_1 (z) =\alpha^4  + \alpha^6 + \alpha^4 z \\
h_2 (z)=    \alpha^4  + \alpha^5 + \alpha^4 z, & h_3 (z)= \alpha^4 +
\alpha^5  + \alpha^6 + \alpha^4 z,\\
h_4 (z)= 1 +\alpha + \alpha^2   +\alpha^7 + \alpha^5 z, & h_5 (z)=
 h_2 (z) \\
h_6 (z) = \alpha^4 + \alpha^7 + \alpha^4 z,& h_7 (z)=
\alpha^4  + \alpha^8  + \alpha^9 + \alpha^7 z ,\\
h_8 (z)= h_0 ( z ), & h_9 (z)= \alpha^4
+\alpha^5 +\alpha^7  + \alpha^4 z,\\
h_{10} (z)= \alpha^4 + \alpha^5        + \alpha^8  + \alpha^9
 + \alpha^7 z, & h_{11}(z) = 1 +\alpha      +
\alpha^6  + \alpha^7 + \alpha^5 z \\
h_{12}(z)= \alpha^4 + \alpha^8  + \alpha^7 z, & h_{13}(z)=  h_7 (z), \\
h_{14}(z)=    \alpha^4 + \alpha^5   + \alpha^8   + \alpha^7 z, &
h_{15}(z)=   h_{10} (z)\\
h_{16} (z)= 1 +\alpha + \alpha^6 + \alpha^5 z, & h_{17}(z)= h_{11} (z),
\end{array}
$$
\end{prop}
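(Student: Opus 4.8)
The plan is to exploit Corollary \ref{coro-auto}: for each state $u\in\{\pm a;a\in A\}$ appearing as a neighbor, I describe $\E(u)=\E\cap(\E+u)$ by looking at all infinite paths in $\A$ starting at the initial state that "witness" the translation $u$, i.e. all pairs $(\varepsilon,\varepsilon')\in\Dinfty\times\Dinfty$ with $\sum_{i\ge4}\varepsilon_i\alpha^i=u+\sum_{i\ge4}\varepsilon'_i\alpha^i$, and reading off the set of values $\sum_{i\ge4}\varepsilon_i\alpha^i$. Concretely, fixing $u=\sum_{i=0}^{3}c_i\alpha^i$ (or one of the three special states), the condition is that $(\varepsilon_i,\varepsilon'_i)_{i\ge4}$ is an infinite path in $\A$ starting not at $0$ but at the state $A_3$ obtained after "consuming" the digits of $u$ below $\alpha^4$; then $\E(u)=\{\sum_{i\ge4}\varepsilon_i\alpha^i\}$ ranges over all such paths. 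So the whole proposition reduces to: (i) start the automaton in the appropriate state for each of the listed $u$, and (ii) enumerate the (eventually periodic) accepted label sequences, which a finite inspection of $\A$ delivers.

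The key steps, carried out case by case following the Annexe automaton, are as follows. First, for the six singleton cases (items $c$, $d$, $e$ and their partners among the $\pm a$, together with $\E(1+\alpha^2)$-type states) I would check that from the relevant start state there is a \emph{unique} infinite path in $\A$, necessarily purely periodic; summing the corresponding geometric series in $\alpha$ gives the stated closed forms $\frac{\alpha^4}{1-\alpha^2}$, $\frac{\alpha^5+\alpha^6}{1-\alpha^4}$, $\frac{\alpha^4+\alpha^5}{1-\alpha^4}$. (These are exactly the periodic expansions already exhibited in the proof of Proposition~\ref{egalite} via the identities for $-\alpha^3$ and \eqref{eq-5}.) Second, for $\E(1)$ and $\E(\alpha^2)$ I would observe that from their start states the automaton's future behavior is, after one transition, identical to that from the start state of $X=\E(1+\alpha+\alpha^2)$; this gives the affine relations $\E(1)=1+\alpha X$ and $\E(\alpha^2)=-\alpha^{-1}-1-\alpha+\alpha^{-1}X$, where the affine factors record the finitely many digits read before the automaton "synchronizes" with the $X$-computation and the relation $A_{k+1}=\alpha^{-1}A_k+(\varepsilon_{k+1}-\varepsilon'_{k+1})\alpha^3$ of \eqref{rel-Ak} tracks the geometric scaling. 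Third, for $\E(\alpha)$ and $\E(\alpha+\alpha^2)$ the start state has finitely many outgoing "branches", each of which after a bounded prefix feeds into either the $X$-computation or the $Y$-computation ($Y=\E(1+\alpha)$); decomposing $\E(u)$ along these branches yields the displayed unions $f_0(X)\cup f_1(X)\cup f_1(Y)$ and $g_0(X)\cup g_1(X)\cup g_1(Y)\cup g_2(Y)\cup g_3(Y)$ with the affine maps $f_j,g_j$ read off directly. Finally, the self-referential identities $h$ and $i$ for $X$ and $Y$ themselves come from applying the \emph{same} branch analysis to the start states of $X$ and $Y$: each out-edge of $\A$ from those states leads, after reading one or a few labels, back to a state whose accepted-path set is again (a translate/scaling of) that of $X$ or of $Y$, producing the graph-directed IFS $X=\bigcup_{i=0}^{4}h_i(X)\cup h_1(Y)\cup h_3(Y)$ and $Y=\bigcup_{i=5}^{11}h_i(Y)\cup\bigcup_{i=12}^{17}h_i(X)$.

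For the measure-zero claim on pairwise intersections, I would invoke Proposition~\ref{pavage}: the $\E+p$, $p\in G$, tile $\RR\times\CC$ with overlaps of Lebesgue measure zero, and since each $\E(u)$ lies in $\E\cap(\E+u)$ with $u$ a nonzero element of $G$ (using the two identities $\alpha^{-3}+\alpha^{-2}+1+\alpha^3=1+2\alpha+\alpha^2$ and $\alpha^{-2}+\alpha^{-1}+\alpha=-1+\alpha^2$ to rewrite the special states inside $G=\ZZ\alpha^0+\ZZ\alpha+\ZZ\alpha^2$), distinct $\E(u)$ sit in distinct tiles $\E+u$ and hence meet in measure zero; the same fact already shows each listed $\E(u)$ is nonempty, as in the preceding proposition.

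The main obstacle is the bookkeeping in step three and in the $h,i$ identities: one must track, for every out-edge of the finitely many relevant states of $\A$, exactly which bounded prefix is read and which of $X$, $Y$ the computation synchronizes with, and then verify that the affine maps $h_i$ (including the coincidences $h_5=h_2$, $h_8=h_0$, $h_{13}=h_7$, $h_{15}=h_{10}$, $h_{17}=h_{11}$) are recorded correctly — in particular checking, via Corollary~\ref{coro-auto} and a look at the Annexe, that no spurious branch survives (the non-existence of certain paths, e.g. the excluded $(0,\varepsilon'_0)(0,\varepsilon'_1)(0,0)(0,1)(\varepsilon_4,1)(\varepsilon_5,1)$ continuations seen in the previous proof, is what prunes the unions to exactly the stated sizes). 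This is a finite but genuinely intricate verification; everything else is routine summation of geometric series in $\alpha$ and an appeal to the automaton.
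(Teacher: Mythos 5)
Your strategy coincides with the paper's: translate $z\in\E(u)$ via Corollary \ref{coro-auto} into an infinite path of $\A$ whose first transitions (reading the digits of $u$) are forced, enumerate the admissible continuations, and rewrite each branch either as a geometric series (the singleton cases c, d, e) or as an affine copy of $X=\E(1+\alpha+\alpha^2)$ or $Y=\E(1+\alpha)$ (cases a, b, f, g, h, i). However, there is a genuine gap in your third step and in your treatment of h) and i): it is not true that every surviving branch ``after a bounded prefix feeds into either the $X$-computation or the $Y$-computation.'' From the states relevant to $\E(\alpha+\alpha^2)$, $X$ and $Y$, some admissible continuations enter the deterministic, purely periodic part of $\A$ (tails $ww\cdots$ with $w=(0,1)(1,0)(1,0)(0,1)$, resp.\ $(0,1)(1,0)$), and these contribute finitely many \emph{extra isolated points}: three points $z_1,z_2,z_3$ in case g), four points $x_1,\dots,x_4$ in case h), and eight points $y_1,\dots,y_8$ in case i). Your argument as written only yields, e.g., $\E(\alpha+\alpha^2)\subseteq g_0(X)\cup g_1(X)\cup g_1(Y)\cup g_2(Y)\cup g_3(Y)\cup\{z_1,z_2,z_3\}$, not the claimed equality; the ``pruning'' you invoke eliminates non-existent paths, whereas these points do exist and are not a priori contained in the listed affine pieces.

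Absorbing those points is a substantial part of the paper's proof: for each of them one must produce a \emph{second} expansion (again read off the automaton) showing it already lies in one of the listed sets. For instance $z_1=\alpha^4+\alpha^5+\sum_{i\geq 2}(\alpha^{4i+1}+\alpha^{4i+2})$ gives $z_1,z_2\in g_3(Y)$, and the identity $2\alpha^5+\alpha^6=\alpha+\alpha^2+\alpha^7$ gives $z_3\in g_2(Y)$; similarly $x_1=x_3\in h_0(X)$ and $x_2=x_4\in h_1(X)$, while $y_1=\alpha^4+\alpha^3 z_1$ and analogous re-expansions place the eight $y_i$ into $h_5(Y)$, $h_6(Y)$, $h_9(Y)$ and $h_{16}(X)$. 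Without this absorption step the equalities in g), h), i) — precisely the graph-directed system generating the boundary announced in the abstract — are not established. A smaller omission: apart from case a), where one checks $1+\alpha X\subset(1+\alpha\E)\cap(\alpha^4+\alpha\E)\subset\E\cap(\E+1)$, you should justify the reverse inclusions, i.e.\ why each affine piece is contained in $\E\cap(\E+u)$; but the decisive missing ingredient is the treatment of the exceptional periodic branches.
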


\begin{proof}
a)
The set $1 +\alpha X$ is clearly included in $1 + \alpha \E$.
Moreover it is easy to check that $1+\alpha X$ is a subset of $\alpha^4+ \alpha \E$ which is
included in $ \E$.
Hence $1+\alpha X \subset \E (1)$.

On the other hand, let $z \in \E (1)$.
Then, there exist $(\varepsilon_i)_{i\geq 4}$ and $(\varepsilon'_i)_{i\geq 4}$ in $\D^\infty$ such that
$z=1+\sum_{i\geq 4} \varepsilon_i \alpha^i = \sum_{i\geq 4} \varepsilon'_i \alpha^i$.
From Corollary \ref{coro-auto}, $(1,0)(0,0)(0,0)(0,0)(\varepsilon_4 , \varepsilon'_4)$ $(\varepsilon_5 ,\varepsilon'_5)$
is a finite path in the automaton $\A$ starting at the initial state.
Following this path in the automaton we deduce $(\varepsilon_4 , \varepsilon'_4)=(0,1)$ and
$(\varepsilon_5 , \varepsilon'_5)=(1,0)$.
It gives $z= 1+ \alpha^5+ \alpha^2 w= \alpha^4+ \alpha^2 w'$ where $w, w' \in \E$.
Consequently
 $ \E(1) \subset (1 + \alpha \E) \cap ( \alpha^4+ \alpha \E)=1+\alpha (\E \cap (1+\alpha +\alpha^2+\E)) = 1 +\alpha X$.

b)
We have
$
1 + \alpha + \alpha^2 + \alpha \E(\alpha^2)= (\alpha^4+  \alpha \E) \cap (1+ \alpha+ \alpha^2 + \alpha \E)
\subset
\E \cap (1+ \alpha+ \alpha^2 + \E)
= X$.
Hence
$ \E (\alpha^2) \subset - \frac{1}{\alpha }-1- \alpha + \frac{X}{\alpha}.$
To prove the other inclusion, let
$z \in X$.
Then by the automaton we deduce that
$z=
 1+ \alpha + \alpha^2  + \alpha w= \alpha^4 + \alpha w',\; w, w' \in \E. $
Hence $- \frac{1}{\alpha }-1- \alpha + \frac{z}{\alpha}= w= \alpha^2  + w'  $
and $- \frac{1}{\alpha }-1- \alpha + \frac{X}{\alpha}  \subset \E (\alpha^2)$.

c)
Let $z \in \E (1 + \alpha^2)$ : $z=\sum_{i\geq 4} \varepsilon_i \alpha^i = 1+\alpha^2 +\sum_{i\geq 4} \varepsilon'_i \alpha^i$,
$(\varepsilon_i)_{i\geq 4}$, $(\varepsilon'_i)_{i\geq 4} \in \D^\infty$.
Corollary \ref{coro-auto} and the automaton show that $(0,1)(0,0)(0,1)(0,0)(\varepsilon_4 ,\varepsilon'_4) \dots$ is a infinite path
starting in the initial state and $(\varepsilon_i , \varepsilon_i')_{i\geq 4}$ is equal to  $uuu\dots$ where $u=(1,0)(0,1)$.
Then,
$z= \alpha^4 + \alpha^6  +\alpha^8 + \cdots =
 1+ \alpha^2 + \alpha^5  + \alpha^7 +\alpha^9 + \dots$
and
$\E (1 + \alpha^2)=
\alpha^4 ( 1- \alpha^2 )^{-1} $.

d)
Let $z \in \E ( \alpha^{-2}+ \alpha^{-1}+ \alpha)$.
From Corollary \ref{coro-auto} and using the automaton we deduce that
$z=\alpha^{-2}+ \alpha^{-1}+ \alpha  +\alpha^{4}+\sum_{i=2}^{\infty} (\alpha^{4i-1}+\alpha^{4i})  = \sum_{i=1}^{\infty} (\alpha^{4i+1}+\alpha^{4i+2}).$
Hence
$\E (\alpha^{-2}+ \alpha^{-1}+ \alpha)= ( \alpha^5 + \alpha^6 )(1- \alpha^4)^{-1}$.

e)
Corollary \ref{coro-auto} and the automaton give the result.

f)
Let $z \in \E(\alpha)$.
Then, there exist $(\varepsilon_i)_{i\geq 4}$ and $(\varepsilon'_i)_{i\geq 4}$ in $\D^\infty$ such that
$z=\sum_{i\geq 4} \varepsilon_i \alpha^i = \alpha + \sum_{i\geq 4} \varepsilon'_i \alpha^i$.
From Corollary \ref{coro-auto},
$(0,0)(0,1)(0,0)(0,0)(\varepsilon_4 , \varepsilon'_4 )\dots $ is a path in the automaton starting in the initial state.
Hence, $(\varepsilon_4 , \varepsilon'_4 )(\varepsilon_5 , \varepsilon'_5 )(\varepsilon_6, \varepsilon'_6 )$ belongs to
$\{ (0,0), (1,1),(0,1)\}(1,0)(0,1)$.
Consequently, $z$ belongs to the union of
$(\alpha^5 + \alpha^2 \E ) \cap (\alpha + \alpha^2 \E) $,
$(\alpha^4+ \alpha^5 + \alpha^2 \E) \cap (\alpha + \alpha^4+ \alpha^2 \E) $
and
$( \alpha^5 + \alpha^2 \E) \cap (\alpha + \alpha^4+ \alpha^2 \E) $ which is equal to $f_{0}(X) \cup f_{1}(X) \cup f_{1}(Y)$.
Hence $ \E (\alpha )=  f_{0}(X) \cup f_{1}(X) \cup f_{1}(Y)$.

g)
Let $z \in \E(\alpha  + \alpha^2)$.
Then, there exist $(\varepsilon_i)_{i\geq 4}$ and $(\varepsilon'_i)_{i\geq 4}$ in $\D^\infty$ such that
$z=\sum_{i\geq 4} \varepsilon_i \alpha^i = \alpha + \alpha^2 + \sum_{i\geq 4} \varepsilon'_i \alpha^i$.
From Corollary \ref{coro-auto},
$(0,0)(0,1)(0,1)(0,0)(\varepsilon_4 , \varepsilon'_4 )\dots $ is a path in the automaton starting in the initial state.
Hence, we either have
\begin{enumerate}
\item
$((\varepsilon_i , \varepsilon'_i ))_{4\leq i\leq 7} \in (0,1)(1,0) \{ (0,0), (1,1),(1,0)\}(0,1)$,
\item
$(\varepsilon_4 , \varepsilon'_4 )\in \{ (0,0),(1,1) \}$, or
\item
$((\varepsilon_i , \varepsilon'_i ))_{i\geq 4} \in (0,1) \{ (0,0)(0,0), (0,0)(1,1),(1,1)(0,0)\} ww \dots $,
\end{enumerate}

where $w=(0,1)(1,0)(1,0)(0,1)$.
This means that $z$ belongs to

$$
\begin{array}{lll}
& & \left(
(\alpha^5 + \alpha^4 \E)
\cap
(\alpha + \alpha^2  +\alpha^4 + \alpha^7+  \alpha^4 \E )
\right) \\
& \cup &
\left(
(\alpha^5 + \alpha^6 +\alpha^4 \E)
\cap
(\alpha + \alpha^2  +\alpha^4 +\alpha^6+ \alpha^7+  \alpha^4 \E )
\right) \\
&\cup &
\left(
(\alpha^5 + \alpha^6 +\alpha^4 \E)
\cap
(\alpha + \alpha^2  +\alpha^4 + \alpha^7+  \alpha^4 \E )
\right) \\
&\cup &
\left(
(\alpha \E \cap (\alpha + \alpha^2 +  \alpha \E )
\right)
\cup
\left((\alpha^4 + \alpha \E) \cap (\alpha + \alpha^2  +\alpha^4 +   \alpha \E ) \right)\\
&\cup  & \left\{ z_1 , z_2 , z_3 \right\} \\
= &  & g_0 (X)  \cup g_1 (X) \cup g_1 (Y) \cup g_2
(Y) \cup g_3 (Y)
\cup
\left\{ z_1 , z_2 , z_3 \right\}
\end{array}
$$

where
$
z_1
=
\sum_{i=2}^{+\infty} (\alpha^{4i}+  \alpha^{4i+1})=
\alpha+  \alpha^2 + \alpha^4 +\alpha^7+ \sum_{i=2}^{+\infty}
(\alpha^{4i+2}+  \alpha^{4i+3}),
 z_2
=
\alpha^6 +z_1 \hbox{ and }
z_3
=
\alpha^5 +z_1.
$
We can also check that
$(1,0)(1,0)(0,0)(0,0)uuu\dots $, where $u=(0,1)(1,1)(1,0)(1,0)$, is an infinite path of the automaton starting in the initial state.
Consequently,
$
z_1= \alpha^4+ \alpha^5 + \sum_{i=2}^{+\infty} (\alpha^{4i+1}+
\alpha^{4i+2})
$
and

$$
z_1 \in \left((\alpha^4 + \alpha \E) \cap
(\alpha + \alpha^2  +\alpha^4 +   \alpha \E ) \right)= g_3(Y).
$$

Moreover, it shows that $z_2$ belongs to $g_3(Y)$.
In the same way,
$
z_1=
\alpha^5+  \alpha^6 + \alpha^8 + \sum_{i=2}^{+\infty}
(\alpha^{4i+3}+ \alpha^{4i+4})
$.
Thus, $z_3 = 2 \alpha^5+  \alpha^6
+ \alpha^8 + \sum_{i=2}^{+\infty} (\alpha^{4i+3}+ \alpha^{4i+4})=
\alpha^5+  \sum_{i=2}^{+\infty} (\alpha^{4i}+  \alpha^{4i+1}) .$
But $2 \alpha^5 + \alpha^6 = \alpha+ \alpha^2 + \alpha^7$, consequently
 $z_3$ belongs to $\left( \alpha \E) \cap (\alpha + \alpha^2 +
\alpha \E ) \right)= g_{2}(Y)$.

h)
Let $z \in X= \E(1+ \alpha  + \alpha^2)$.
Then, there exist $(\varepsilon_i)_{i\geq 4}$ and $(\varepsilon'_i)_{i\geq 4}$ in $\D^\infty$ such that
$z=\sum_{i\geq 4} \varepsilon_i \alpha^i = 1+ \alpha + \alpha^2 + \sum_{i\geq 4} \varepsilon'_i \alpha^i$.
From Corollary \ref{coro-auto},
we necessarily have $(\varepsilon_4 , \varepsilon'_4)=(1,0)$ and one of the following situations :

\begin{enumerate}
\item
$((\varepsilon_i , \varepsilon'_i ))_{ i\geq 5} \in (1,0)\{ (0,0), (1,1)\} ww\dots $ where $w =(0,1)(1,0)$;
\item
$((\varepsilon_i , \varepsilon'_i ))_{i\geq 5} \in (0,1) \{ (0,0),(1,1)\} ww\dots  $ where $w=(0,1)(1,0)(1,0)(0,1)$;
\item
$(\varepsilon_i , \varepsilon'_i )_{ 5\leq i\leq 8} \in \{ (0,0),(1,1)\}^2 (0,1)(1,0)$;
\item
$(\varepsilon_i , \varepsilon'_i )_{ 5\leq i\leq 9} = (1,0)(1,0)(0,1)(1,0)(0,1)$;
\item
$(\varepsilon_i , \varepsilon'_i )_{ 5\leq i\leq 8} \in \{ (0,0),(1,1)\} (1,0)(0,1)(1,0)$.
\end{enumerate}

This means $z$ belongs to $\bigcup_{i=0}^{4}h_i (X)  \cup h_1 (Y)
\cup h_3 (Y) \cup \{x_1, x_2,x_3, x_4\}$ where

\begin{align*}
x_{1}    = & \alpha^4 + \alpha^5 + \sum_{i=4}^{+\infty} \alpha^{2i}= 1+\alpha+ \alpha^2 + \sum_{i=3}^{+\infty} \alpha^{2i+1},\\
x_{2}    = &  x_1 + \alpha^6  ,\\
x_{3}    = & \alpha^4 +  \sum_{i=2}^{+\infty} (\alpha^{4i}+ \alpha^{4i+1})= 1+\alpha+ \alpha^2 + \alpha^5+ \alpha^7+ \sum_{i=2}^{+\infty} (\alpha^{4i+2}+ \alpha^{4i+3}),\\
x_{4}    = & x_3 +\alpha^6,\\
h_{0}(X) = & (\alpha^4 + \alpha^4 \E) \cap (1+\alpha + \alpha^2  + \alpha^7+  \alpha^4 \E ), \\
h_{1}(X) = & (\alpha^4+\alpha^6  + \alpha^4 \E) \cap (1+\alpha + \alpha^2  +\alpha^6+ \alpha^7+  \alpha^4 \E ), \\
h_{2}(X) = & (\alpha^4+\alpha^5 + \alpha^4 \E) \cap (1+\alpha + \alpha^2  +\alpha^5+ \alpha^7+  \alpha^4 \E ), \\
h_{3}(X) = & (\alpha^4+\alpha^5 +\alpha^6+ \alpha^4 \E) \cap (1+\alpha + \alpha^2  +\alpha^5+\alpha^6+ \alpha^7+  \alpha^4 \E ), \\
h_{4}(X) = & (\alpha^4+\alpha^5 +\alpha^6 +\alpha^8 + \alpha^5 \E) \cap (1+\alpha + \alpha^2 +  \alpha^7+  \alpha^5 \E ) , \\
h_{1}(Y) = & (\alpha^4+\alpha^6 + \alpha^4 \E) \cap (1+\alpha + \alpha^2  + \alpha^7+  \alpha^4 \E ) \hbox{ and } \\
h_{3}(Y) = &  (\alpha^4+ \alpha^5 + \alpha^6 + \alpha^4 \E) \cap (1+\alpha +\alpha^2  + \alpha^5 + \alpha^7+  \alpha^4 \E ) .\\
\end{align*}

We easily can check (using Corollary \ref{coro-auto} and the automaton) that

$$
x_1 = x_3 =  \alpha^4 + \sum_{i=2}^{+\infty}(\alpha^{4i}+ \alpha^{4i+1}) ,
$$

and thus $x_1 \in  h_{0}(X)$, $x_2 \in h_{1}(X)$, and $x_2 = x_4$, which concludes the proof of h).

i)
Let $z \in X= \E(1+ \alpha )$. Then, there exist
$(\varepsilon_i)_{i\geq 4}$ and $(\varepsilon'_i)_{i\geq 4}$ in
$\D^\infty$ such that $z=\sum_{i\geq 4} \varepsilon_i \alpha^i = 1+
\alpha + \sum_{i\geq 4} \varepsilon'_i \alpha^i$.
From Corollary \ref{coro-auto},
we necessarily have $(\varepsilon_4 , \varepsilon'_4)=(1,0)$ and one
of the following situations :

\begin{enumerate}
\item
$((\varepsilon_i , \varepsilon'_i ))_{ i\geq 5} \in \{ (0,0),
(1,1)\} (0,1)^2  \{ (0,0), (1,1)\}^2 ww\dots $ ;
\item
$((\varepsilon_i , \varepsilon'_i ))_{ i\geq 5} \in (1,0)\{ (0,0),
(1,1)\}^2 (1,0)(0,1) ww\dots $;
\item
$((\varepsilon_i , \varepsilon'_i ))_{ 5\leq i\leq 7} \in \{ (0,0),
(1,1)\} (0,1) \{ (0,0), (1,1)\}$;
\item
$((\varepsilon_i , \varepsilon'_i ))_{ 5\leq i\leq 8} \in \{ (0,0),
(1,1)\} (0,1)^2 (1,0)\{ (0,0), (1,1), (1,0)\} (0,1)(1,0)$;
\item
$((\varepsilon_i , \varepsilon'_i ))_{5\leq i\geq 8} \in (1,0)(0,1)\{
(0,0), (1,1), (0,1)\} (1,0)(0,1)$.

\end{enumerate}

where $w=(0,1)(1,0)(1,0)(0,1)$.
Hence $z$ belongs to

$$
\left(\bigcup_{i=5}^{11}h_i (Y)\right)
\cup
\left(\bigcup_{i=12}^{17}h_i (X) \right)
\cup \{y_i ; 1\leq i\leq 8\} ,
$$

where

\begin{align*}
y_1= &  \alpha^4 + \sum_{i=2}^{+\infty} (\alpha^{4i+3}+
\alpha^{4i+4})= 1+ \alpha+\alpha^6 + \alpha^7 +\alpha^{10} + \sum_{i=3}^{+\infty} (\alpha^{4i+1}+
\alpha^{4i+2}),\\
y_2= & y_1+ \alpha^9,\; y_3= y_1+ \alpha^8 ,\; y_4= y_1+
\alpha^5,\; y_5= y_1+ \alpha^5 + \alpha^9.\\
y_6= & y_1 + \alpha^5+\alpha^8= 1+ \alpha+ \sum_{i=2}^{+\infty}
(\alpha^{4i+1}+ \alpha^{4i+2}), \;  y_7=  y_6+ \alpha^7,\; y_8= y_6+ \alpha^6 ,\\
h_{5}(Y) = & (\alpha^4+\alpha^5  +  \alpha^4 \E) \cap (1+\alpha + \alpha^5 +\alpha^6 +      \alpha^4 \E ) ,\\
h_{6}(Y) = &  (\alpha^4+\alpha^7  + \alpha^4 \E) \cap (1+\alpha   +\alpha^6+ \alpha^7+  \alpha^4 \E ),\\
h_{7}(Y) = &  (\alpha^4+\alpha^8 +\alpha^9+ \alpha^7 \E) \cap (1+\alpha + \alpha^6  +\alpha^7+ \alpha^{10}+  \alpha^7 \E ), \\
h_{8}(Y) = &  (\alpha^4+  \alpha^4 \E) \cap (1+\alpha + \alpha^6 +    \alpha^4 \E ), \\
h_{9}(Y) = &  (\alpha^4+\alpha^5 +\alpha^7 +  \alpha^4 \E) \cap (1+\alpha + \alpha^5 +\alpha^6 +    \alpha^7+  \alpha^4 \E ),\\
h_{10}(Y)= &  (\alpha^4+\alpha^5+\alpha^8+\alpha^9 + \alpha^7 \E) \cap (1+\alpha + \alpha^5  + \alpha^6 + \alpha^7  + \alpha^{10} + \alpha^7 \E ),\\
h_{11}(Y)= &  (\alpha^4+\alpha^5+\alpha^8+ \alpha^5 \E) \cap (1+\alpha +  \alpha^6 + \alpha^7  +   \alpha^5 \E ) ,\\
h_{12}(Y)= &  (\alpha^4+ \alpha^8+ \alpha^7 \E) \cap (1+\alpha   + \alpha^6 + \alpha^7  + \alpha^{10}  +\alpha^7 \E ), \\
h_{13}(X)= &  (\alpha^4+\alpha^8 +\alpha^9 +  \alpha^7\E) \cap (1+\alpha + \alpha^6 +\alpha^7 +    \alpha^9+\alpha^{10}+  \alpha^7 \E ), \\
h_{14}(X)= &  (\alpha^4+\alpha^5+ \alpha^8+ \alpha^7 \E) \cap (1+\alpha + \alpha^5  + \alpha^6 + \alpha^7  + \alpha^{10}  +\alpha^7 \E ), \\
h_{15}(X)= &  (\alpha^4+\alpha^5+ \alpha^8+\alpha^9+ \alpha^7 \E) \cap (1+\alpha + \alpha^5  + \alpha^6 + \alpha^7 +\alpha^9 + \alpha^{10}  +\alpha^7 \E ) ,\\
h_{16}(X)= &  (\alpha^4+\alpha^5+\alpha^8+ \alpha^5 \E) \cap (1+\alpha +  \alpha^6 + \alpha^5 \E ) ,\\
h_{17}(X)= &  (\alpha^4+\alpha^5+\alpha^7 +\alpha^8+ \alpha^5 \E) \cap (1+\alpha +  \alpha^6 +\alpha^7 + \alpha^5 \E )  ,\\
\end{align*}

Let us prove that for each integer $i \in \{1,\ldots, 8\}$, there
exists $j \in  \{5,\ldots, 11\}$ or $k \in  \{12,\ldots, 17\}$ such
that $y_i$ belongs to  $h_{j}(X)$ or to $h_{k}(Y).$

Indeed, since $y_1= \alpha^4+ \alpha^3 z_1$ (see case g)), then $y_1 \in
(\alpha^4+\alpha^7  + \alpha^4 \E) \cap ( 2 \alpha^4+\alpha^5
+\alpha^7 + \alpha^4 \E)= h_{6}(Y).$
We deduce that $y_2$ and $y_3$ belong also to $h_{6}(Y)$, and, $y_4$
and $y_5$ belong to $h_{9}(Y)$.

Using the automaton we can verify that $y_6 = 1+ \alpha + \alpha^5 + \alpha^6+ \sum_{i=2}^{+\infty } (\alpha^{4i+2}  + \alpha^{4i+3})$.
Hence, $y_6$ belongs to $1+ \alpha + \alpha^5 + \alpha^6+ \alpha^4 \E$.
But it also belongs to $\alpha^4 +\alpha^5 +\alpha^4\E $.
Thus $y_6 \in h_{5}(Y)$ and $y_7 \in h_9 (Y)$.

We have  $y_8 = y_6+ \alpha^6 \in (1+\alpha + \alpha^6+ \alpha^5 \E
)$.
On the other hand we can check using the automaton that $y_8=
\alpha^4+ \alpha^5 +  \alpha^8 +\sum_{i=5}^{+\infty} \alpha^{2i}$,
hence $y_8 \in (\alpha^4 +\alpha^5+ \alpha^8+  \alpha^5 \E )$ and
$y_8$ belongs to $h_{16}(X)$.
\end{proof}

\vspace{1em}
{\bf Remarks and comments.}
There are points which has at least $6$ expansions in base $\alpha$. For example:

$$
\begin{array}{lll}
\alpha+  \sum_{i=2}^{+\infty } \alpha^{2i} & = \sum_{i=1}^{+\infty} (\alpha^{4i}+ \alpha^{4i+1})\\
 & = 1 + \alpha+ \alpha^2+ \sum_{i=2}^{\infty} \alpha^{2i+1} \\
 & = 1 + \alpha+\sum_{i=1}^{\infty} (\alpha^{4i+1}+ \alpha^{4i+2}))\\
 & = \alpha + \alpha^2+\alpha^4+\sum_{i=1}^{\infty} (\alpha^{4i+3}+ \alpha^{4i+4})\\
 & = \alpha^{-3} + \alpha^{-2}+1+ \alpha^3+\sum_{i=1}^{\infty} (\alpha^{4i+2}+ \alpha^{4i+3}) .
\end{array}
$$

We address the two following questions :

\begin{enumerate}
\item
Can you parameterize the boundary of $\E_{1,1,1}$ ?
\item
Does this boundary be homeomorphic to the sphere ?
\end{enumerate}

The technics used in this work can be used to study $\E_{a_1,a_2,\dots , a_d}$ with the assumption that $a_1\geq a_2\geq \dots \geq a_d\geq 1$.

\bigskip

{\bf acknowledgements.}
Both authors would like to thank the Brazil-France agreement for cooperation in Mathematics and the CNRS that permitted the authors to visit each other.
This strongly contributed to this work.
The first author would like to thank the hospitality of the Mathematic department of the State University of S\~ao Paolo in S\~ao Jos\'e do Rio Preto (Brazil).
The second author thanks the Laboratoire Ami\'enois de Math\'ematiques Fondamentales et Appliqu\'ees, CNRS-UMR 6140, from the University of Picardie Jules Verne (France) for his hospitality.

The second author was supported by a CNPq grant Proc. 305043/2006-4.

\newpage

{\bf Annexe.}

\vskip 3cm

\hskip 1cm

\MinusPicture{\VCDraw
{
\begin{VCPicture}{(-15,-23)(15,23)}
%
\SwivelLabel


\VCPut[0]{(0,23)}{\StateVar[0]{(0,0)} {0000000}}
\VCPut[0]{(-10,23)}{\StateVar[-\alpha-\alpha^2-\alpha^3]{(0,0)} {m0000111}}
\VCPut[0]{(10,23)}{\StateVar[\alpha+\alpha^2+\alpha^3]{(0,0)}    {0000111}}

\VCPut[0]{(-14,20)}{\StateVar[-\alpha^3]{(0,0)}                 {m0000001}}
\VCPut[0]{(14,20)}{\StateVar[\alpha^3]{(0,0)}                    {0000001}}

\VCPut[0]{(-4,16)}{\StateVar[-1-\alpha]{(0,0)} {m0001100}}
\VCPut[0]{(4,16)}{\StateVar[1+\alpha]{(0,0)}    {0001100}}

\VCPut[0]{(-10,12)}{\StateVar[-\alpha^2-\alpha^3]{(0,0)} {m0000011}}
\VCPut[0]{(-6,12)}{\StateVar[-\alpha-\alpha^2]{(0,0)}    {m0000110}}
\VCPut[0]{(10,12)}{\StateVar[\alpha^2+\alpha^3]{(0,0)}    {0000011}}
\VCPut[0]{(6,12)}{\StateVar[\alpha+\alpha^2]{(0,0)}       {0000110}}


\VCPut[0]{(-6,8)}{\StateVar[-1-\alpha^2]{(0,0)}       {m0001010}}
\VCPut[0]{(6,8)}{\StateVar[1+\alpha^2]{(0,0)}          {0001010}}

\VCPut[0]{(-15,4)}{\StateVar[1+\alpha+\alpha^2]{(0,0)} {0001110}}
\VCPut[0]{(-11,4)}{\StateVar[-\alpha^2]{(0,0)}        {m0000010}}
\VCPut[0]{(-6,4)}{\StateVar[-\alpha-\alpha^3]{(0,0)}  {m0000101}}
\VCPut[0]{(6,4)}{\StateVar[\alpha+\alpha^3]{(0,0)}     {0000101}}
\VCPut[0]{(11,4)}{\StateVar[\alpha^2]{(0,0)}           {0000010}}
\VCPut[0]{(15,4)}{\StateVar[-1-\alpha-\alpha^2]{(0,0)}{m0001110}}

\VCPut[0]{(-4,-4)}{\StateVar[1+\alpha+\alpha^3]{(0,0)}  {0001101}}
\VCPut[0]{(4,-4)}{\StateVar[-1-\alpha-\alpha^3]{(0,0)} {m0001101}}
\VCPut[0]{(-11,-4)}{\StateVar[-\alpha]{(0,0)}                    {m0000100}}
\VCPut[0]{(11,-4)}{\StateVar[\alpha]{(0,0)}                       {0000100}}
\VCPut[0]{(-15,-4)}{\StateVar[-1]{(0,0)}                         {m0001000}}
\VCPut[0]{(15,-4)}{\StateVar[1]{(0,0)}                           {0001000}}

\VCPut[0]{(-6,-8)}{\StateVar[\frac{1}{\alpha}+1+\alpha^2]{(0,0)} {0011010}}
\VCPut[0]{(6,-8)}{\StateVar[-\frac{1}{\alpha}-1-\alpha^2]{(0,0)} {m0011010}}

\VCPut[0]{(-6,-12)}{\StateVar[\frac{1}{\alpha^2}+\frac{1}{\alpha} +\alpha]{(0,0)}  {0110100}}
\VCPut[0]{(6,-12)}{\StateVar[-\frac{1}{\alpha^2}-\frac{1}{\alpha} -\alpha]{(0,0)} {m0110100}}
\VCPut[0]{(-11,-12)}{\StateVar[-1-\alpha^2-\alpha^3]{(0,0)}                            {m0001011}}
\VCPut[0]{(11,-12)}{\StateVar[1+\alpha^2+\alpha^3]{(0,0)}                               {0001011}}

\VCPut[0]{(-6,-16)}{\StateVar[\frac{1}{\alpha^3}+\frac{1}{\alpha^2}+1+\alpha^3]{(0,0)}  {1101001}}
\VCPut[0]{(6,-16)}{\StateVar[-\frac{1}{\alpha^3}-\frac{1}{\alpha^2}-1-\alpha^3]{(0,0)} {m1101001}}




\VCPut[0]{(-2,-20)}{\StateVar[-1-\alpha^3]{(0,0)}                                    {m0001001}}
\VCPut[0]{(2,-20)}{\StateVar[1+\alpha^3]{(0,0)}                                       {0001001}}


\LoopN{0000000}{(0,0),(1,1)}
\EdgeR{0000000}{m0000001}{(0,1)}
\EdgeL{0000000}{0000001}{(1,0)}
\VArcR[.4]{arcangle=-80,ncurv=1.3}{0000111}{0001110}{(0,0),(1,1)}
\VArcL[.4]{arcangle=80,ncurv=1.3}{m0000111}{m0001110}{(0,0),(1,1)}


\EdgeL[0.1]{m0001100}{0000110}{(1,0)}
\EdgeR[0.1]{0001100}{m0000110}{(0,1)}

\EdgeL{m0000110}{m0001100}{(0,0),(1,1)}
\EdgeR[0.1]{m0000110}{m0001101}{(0,1)}

\EdgeR{0000110}{0001100}{(0,0),(1,1)}
\EdgeL[0.1]{0000110}{0001101}{(1,0)}

\EdgeR{m0000011}{m0000111}{(0,1)}
\EdgeL{m0000011}{m0000110}{(0,0)}
\EdgeR{m0000011}{m0000110}{(1,1)}

\EdgeL{0000011}{0000111}{(0,1)}
\EdgeL{0000011}{0000110}{(1,1)}
\EdgeR{0000011}{0000110}{(0,0)}

\EdgeL[0.2]{m0000001}{m0000011}{(0,1)}
\EdgeR{m0000001}{m0000010}{(1,1),(0,0)}

\EdgeR[0.2]{0000001}{0000011}{(0,1)}
\EdgeL{0000001}{0000010}{(1,1),(0,0)}

\ArcL[0.5]{m0001010}{0001010}{(1,0)}
\ArcL[0.5]{0001010}{m0001010}{(0,1)}

\EdgeR{m0001110}{0000010}{(1,0)}

\EdgeL{0001110}{m0000010}{(0,1)}

\EdgeL{m0000010}{m0000101}{(0,1)}
\EdgeR[0.2]{m0000010}{m0000100}{(0,0)}
\EdgeR[0.4]{m0000010}{m0000100}{(1,1)}
\EdgeL[0.2]{m0000010}{0011010}{(1,0)}

\EdgeR{0000010}{0000101}{(1,0)}
\EdgeL[0.2]{0000010}{0000100}{(0,0)}
\EdgeL[0.4]{0000010}{0000100}{(1,1)}
\EdgeR[0.2]{0000010}{m0011010}{(0,1)}

\EdgeL[0.05]{m0000101}{m0001011}{(0,1)}
\EdgeR{m0000101}{m0001010}{(0,0)}
\EdgeL{m0000101}{m0001010}{(1,1)}

\EdgeR[0.05]{0000101}{0001011}{(1,0)}
\EdgeL{0000101}{0001010}{(0,0)}
\EdgeR{0000101}{0001010}{(1,1)}

\EdgeR[0.15]{0001101}{m0000100}{(0,1)}
\EdgeL[0.15]{m0001101}{0000100}{(1,0)}
\EdgeL[0.1]{0001101}{0011010}{(0,0)}
\EdgeL[0.3]{0001101}{0011010}{(1,1)}
\EdgeR[0.1]{m0001101}{m0011010}{(0,0)}
\EdgeR[0.3]{m0001101}{m0011010}{(1,1)}

\VArcR[.1]{arcangle=-60,ncurv=1.7}{m0000100}{m0001001}{(0,1)}
\VArcL[.1]{arcangle=60,ncurv=1.7}{0000100}{0001001}{(1,0)}
\EdgeR{m0000100}{m0001000}{(0,0)}
\EdgeL{m0000100}{m0001000}{(1,1)}
\EdgeL{0000100}{0001000}{(0,0)}
\EdgeR{0000100}{0001000}{(1,1)}

\EdgeL{0011010}{0110100}{(0,0)}
\EdgeR{0011010}{0110100}{(1,1)}
\EdgeL{m0011010}{m0110100}{(0,0)}
\EdgeR{m0011010}{m0110100}{(1,1)}

\EdgeL{0110100}{1101001}{(1,0)}
\EdgeR{m0110100}{m1101001}{(0,1)}

\ArcR[0.06]{1101001}{m0110100}{(0,1)}
\ArcL[0.06]{m1101001}{0110100}{(1,0)}


\EdgeL{m0001000}{0001110}{(1,0)}
\EdgeR{0001000}{m0001110}{(0,1)}

\EdgeL[0.03]{m0001001}{0001100}{(1,0)}
\EdgeR[0.03]{0001001}{m0001100}{(0,1)}

\VArcR[.05]{arcangle=-90,ncurv=1.25}{m0001011}{0001000}{(1,0)}
\VArcL[.05]{arcangle=90,ncurv=1.25}{0001011}{m0001000}{(0,1)}

\end{VCPicture}
}

\end{document}